\newtheorem{thm}{Theorem}[section]
\newtheorem{lemma}[thm]{Lemma}
\newtheorem{prop}[thm]{Proposition}
\newtheorem{cor}[thm]{Corollary}
\theoremstyle{remark}
\newtheorem{rem}[thm]{Remark}
\theoremstyle{definition}
\newtheoremstyle{Claim}{}{}{\itshape}{}{\itshape\bfseries}{:}{ }{#1}
\theoremstyle{Claim}
\newcommand{\T}{{\mathbb{T}}}
\newcommand{\Z}{{\mathbb{Z}}}
\newcommand{\R}{\mathbb{R}}
\newcommand{\eps}{\varepsilon}
\newcommand{\norm}[1]{\left\lVert#1\right\rVert}
\theoremstyle{plain}
\def\sideremark#1{\ifvmode\leavevmode\fi\vadjust{
\vbox to0pt{\hbox to 0pt{\hskip\hsize\hskip1em
\vbox{\hsize3cm\tiny\raggedright\pretolerance10000
\noindent #1\hfill}\hss}\vbox to8pt{\vfil}\vss}}}
\begin{document}

\title[]{A priori Lipschitz estimates for nonlinear equations with mixed local and nonlocal diffusion via the adjoint-Bernstein method}


%
\author{Alessandro Goffi}
\address{Dipartimento di Matematica ``Tullio Levi-Civita'', Universit\`a degli Studi di Padova, 
via Trieste 63, 35121 Padova (Italy)}
\curraddr{}
\email{alessandro.goffi@unipd.it}

\subjclass[2020]{Primary: 35B65, 35R11, 49L12.}
\keywords{Adjoint method, Bochner identity, fractional Hamilton-Jacobi equations, Lipschitz estimates, mixed local and nonlocal operators}
 \thanks{
 The author is member of the Gruppo Nazionale per l'Analisi Matematica, la Probabilit\`a e le loro Applicazioni (GNAMPA) of the Istituto Nazionale di Alta Matematica (INdAM). The author was partially supported by the INdAM-GNAMPA Project 2022 ``Propriet\`a quantitative e qualitative per EDP non lineari con termini di gradiente'' and by the King Abdullah University of Science and Technology (KAUST) project CRG2021-4674 ``Mean-Field Games: models, theory and computational aspects". The author wishes to thank Prof. G. Barles for discussions on gradient estimates via viscosity solutions' methods and for providing many references on the subject.}

\date{\today}

\begin{abstract}
We establish a priori Lipschitz estimates for equations with mixed local and nonlocal diffusion, coercive gradient terms and unbounded right-hand side in Lebesgue spaces through an integral refinement of the Bernstein method. This relies on a nonlinear, nonlocal and variational version of the Bochner identity that involves the adjoint equation of the linearization of the initial problem.
\end{abstract}

\maketitle


\section{Introduction}
The Bernstein method \cite{Bernstein} is nowadays a classical technique to obtain gradient estimates for second order elliptic equations \cite{GT}. The core idea behind this approach is extremely simple and relies on the so called Bochner identity for the Laplacian on the Euclidean space $\R^N$
\[
\Delta|\nabla u|^2=2|D^2u|^2+2\nabla u\cdot \nabla \Delta u.
\]
This in particular shows that if $u$ is a solution to an elliptic/parabolic equation on some domain of the Euclidean space, then $w=|\nabla u|^2$ is a subsolution to an elliptic/parabolic equation. As a consequence, the gradient bound follows from the maximum principle, whenever one knows a priori that $\nabla u$ is bounded on the boundary of the domain, which is typically a consequence of the existence of barrier-like functions, cf \cite{GT}. This nonvariational technique to obtain global (and even local) bounds based on the maximum principle has been fruitfully extended to many nonlinear PDEs, such as quasilinear elliptic equations \cite{GT}, fully nonlinear second order equations \cite{CC}, semilinear equations (with superlinear gradient growth), even driven by the $p$-Laplacian, see e.g. \cite{LionsBook,SerrinProc} and the more recent work \cite{PorrCCM}, integro-differential problems, both linear and (fully) nonlinear, see \cite{CDV}. This approach is also the cornerstone to deduce quite different qualitative and quantitative properties for elliptic and parabolic equations, such as differential Harnack estimates and Liouville theorems, see \cite{SerrinPeletier,Lions85,GarofaloFrac}.\\
One of the main drawbacks of the standard Bernstein method relies on the regularity requirements necessary to carry out the computations. Since third derivatives appear in the Bochner identity, $u$ needs to be smooth enough (e.g. of class $C^3$), and hence only a priori estimates can be derived. Then one has to find a suitable regularization/approximation of the equation having smooth enough solutions to really obtain the regularity estimate after passing to the limit, see e.g. \cite[Remark 3]{CGell}, or instead work at the level of difference quotients starting with suitable weak solutions, cf \cite{cg20}.\\
This difficulty was partially circumvented through the introduction of the so-called weak Bernstein method introduced by G. Barles \cite{Barles91,Barles21} in the realm of fully nonlinear equations, which consists in shifting the attention, after a change of variable, to the maximum of the function
\[
(x,y)\longmapsto u(x)-u(y)-L|x-y|\ ,(x,y)\in\overline{\Omega}\times\overline{\Omega},
\]
$\Omega$ being the state space. The idea is to prove that if it is achieved when $x=y$ for $L$ large enough, then $|\nabla u|\leq L$. The previous idea basically corresponds to look at the equation satisfied by $|\nabla u|^2$, and the structure conditions are similar to those required to run the classical Bernstein argument for nonlinear elliptic equations. There are, among others, three peculiar features of this method: it does not require regular solutions (in particular it applies to viscosity solutions), it does not need strong ellipticity, being applicable to problems with fractional or degenerate diffusion \cite{CDLP,BLT17}, and it allows to treat gradient terms with arbitrary growth. Nonetheless, it still requires $f\in W^{1,\infty}$, as in the standard Bernstein argument.\\
Another approach in the framework of viscosity solutions based again on a slightly different doubling variables method has been introduced by H. Ishii and P.-L. Lions \cite{IshiiLions} to obtain $C^{\alpha}$, $0<\alpha\leq1$, estimates: this method takes advantage of the ellipticity of the diffusion to control the coercivity of the gradient term, being particularly designed for problems with first-order terms below the natural growth and H\"older or bounded coefficients, cf Assumption (3.16) in \cite{IshiiLions}, see e.g. \cite{BCCI12,LeyNguyen} and also \cite{PP,DirrNguyen} for further developments. A general reference discussing both these procedures to get gradient bounds in the context of viscosity solutions is the paper by G. Barles and P. Souganidis \cite{BS}. \\
Viscosity solutions' techniques have been refined for the application to problems with superquadra\-tic gradient growth in \cite{CDLP}. Nevertheless, both these methods have a drawback in terms of the regularity of the data. Being based on the notion of viscosity solution, they rely on the maximum principle and they require at least continuous or Lipschitz data. Albeit the notion of viscosity solution admits variants that allow to encompass discontinuous Hamiltonians with merely summable data (at least when dealing with local terms), no technique seems available to derive gradient bounds for problems with fractional diffusion, strongly coercive gradient terms (especially in the supernatural regime) and data in $L^q$. \\
To this aim, different methods have been explored when the equation has ``ingredients'' belonging to Lebesgue spaces. Such techniques, usually named integral Bernstein methods, are again based on delicate integral refinements of the Bochner identity and started with the work by P.-L. Lions \cite{Lions85}. Equivalently, they can be formulated at the level of the variational formulation of the equation, choosing a $p$-Laplacian of suitable order as a test function. These integral approaches have been extended more recently in \cite{CGell} (see also the references therein), in the study of nonlinear Calder\'on-Zygmund estimates for elliptic problems with superlinear gradient terms. They have been used even in connection with $p$-Laplacian problems without the presence of first-order terms  \cite{CianchiMazyaCPDE,PorrCCM}. Most of these integral techniques for nonlinear equations with power-growth terms have in common the use of the so-called Bakry-\'Emery curvature dimensional inequality \cite{BGL}, that in $\R^N$ reads as
\begin{equation}\label{CD}
|D^2u|^2\geq \frac{1}{N}(\Delta u)^2,
\end{equation}
and it is a consequence of the Cauchy-Schwarz inequality, cf \cite[Definition 20.7]{GarofaloFrac}. This inequality is, among others, a powerful tool to obtain logarithmic gradient estimates for positive harmonic functions, and hence the Harnack inequality. In the nonlinear setting, it is crucial to handle nonlinearities with superlinear gradient growth, at least for elliptic problems. Indeed, if $u$ solves $-\Delta u+|\nabla u|^\gamma=f(x)$, $\gamma>1$, then \eqref{CD} combined with the algebraic inequality $(a-b)^2\geq \frac{a^2}{2}-2b^2$, $a,b\in\R$, imply
\[
|D^2u|^2\geq \frac{|\nabla u|^{2\gamma}}{2N}-\frac{2}{N}f^2,
\]
which allows to gain an additional degree of coercivity respect to $|\nabla u|^2$ through the term $|\nabla u|^{2\gamma}$ and conclude the gradient bound. This crucial and deep step has (once more) a drawback: it does not apply neither to equations involving fractional operators (even in the stationary case) nor to evolution equations with time-dependent source terms belonging to Lebesgue spaces. Indeed, as for the latter, if $u$ solves $\partial_t u-\Delta u+|\nabla u|^\gamma=f(x,t)$, then \eqref{CD} implies the presence of a term involving a time-derivative term that can be absorbed only when one knows a priori that $\partial_tu \geq -C$. For instance, this is the case when $f=f(x)$ or even when $\partial_tf(x,t)$ is essentially bounded or at least belongs to some Lebesgue space. \\
The case of the presence of a fractional diffusion is even worse. In this setting the fractional Bochner identity, cf \cite[equation (2.10)]{Constantin}, replaces $|D^2u|^2$ with the nonlocal term
\[
\int\frac{|\nabla u(x,t)-\nabla u(x+y,t)|^2}{|y|^{N+2s}}\,dy,
\]
which however does not allow to deduce a fractional version of \eqref{CD}. Actually, whether an inequality like \eqref{CD} (for a possibly different constant) holds for the fractional Laplacian was raised in \cite[equation (20.14)]{GarofaloFrac}, and recently answered negatively in \cite{ZacherCPDE}, showing thus that some new technique with respect to the ``classical'' integral Bernstein approach is needed to obtain gradient bounds for fractional equations with merely integrable data. \\

The aim of this note is thus to propose a new Bernstein-type argument to prove Lipschitz estimates that avoids the use of \eqref{CD}, namely the gain of (additional) coercivity by plugging the equation, to deduce gradient bounds in the aforementioned ``negative'' situations, i.e. when the equation is parabolic and/or it presents a nonlocal diffusive term with also unbounded terms in Lebesgue spaces. This would imply existence and uniqueness of solutions as a byproduct through the contraction mapping principle combined with a continuation argument, see \cite{CGsima}.\\
We will focus on the Cauchy problem (for simplicity posed on the $N$-dimensional flat torus)
\begin{equation}\label{eqmain}
\begin{cases}
\partial_tu+\mathcal{L}u+H(x,\nabla u)=f(x,t)&\text{ in }Q_T:=\T^N\times(0,T),\\
u(x,0)=u_0(x)&\text{ in }\T^N\ ,
\end{cases}
\end{equation}
where $\mathcal{L}$ is a diffusion operator defined by
\[
\mathcal{L}:=-\eps\Delta+\mu(-\Delta)^s
\]
when $\eps,\mu>0$. Such mixed diffusive operators have received an increasing attention during the last years, see e.g. the series of works by S. Biagi-S. Dipierro-E. Valdinoci-E. Vecchi \cite{BDVVprse,BDVVcpde} and the recent one by C. De Filippis-G. Mingione \cite{DFM}. The achievement of $L^\infty$ gradient bounds will rely heavily on the analysis of the regularity of solutions to the dual equation of the linearization of \eqref{eqmain}, and it is largely inspired by a method introduced by L.C. Evans \cite{Evans2} to study gradient shock structures of first-order Hamilton-Jacobi equations with non-convex Hamiltonians. Here, we consider
\begin{equation}\label{eqrho}
\begin{cases}
-\partial_t\rho+\mathcal{L}^*\rho-\mathrm{div}(H_p(x,\nabla u)\rho)=0&\text{ in }Q_\tau:=\T^N\times(0,\tau),\\
\rho(x,\tau)=\rho_\tau(x)&\text{ in }\T^N\ ,
\end{cases}
\end{equation}
where $H_p$ stands for the derivative of $H$ with respect to the second entry and
\[
\mathcal{L}^*:=-\eps\Delta+\mu(-\Delta)^s.
\]
We emphasize that such an approach has been recently implemented for viscous Hamilton-Jacobi equations with coercive gradient terms in \cite{cg20}, from which we borrow most of the ideas used in this note. In particular, we exploit the regularizing effect of both the local and the first-order term, therefore can be considered of nonperturbative type, whilst the nonlocal one acts only as a perturbation. The main idea to derive the gradient bound will be based on a variational nonlocal version of a (nonlinear) Bochner identity. Indeed, if $u$ solves \eqref{hjm}, straightforward computations lead to the following identity satisfied by $w=\frac12|\nabla u|^2$:
\begin{multline*}
\partial_t w(x,t)+\mathcal{L}w+\eps|D^2u(x,t)|^2+\frac{\mu}{2}\int_{\T^N}|\nabla u(x,t)-\nabla u(x+y,t)|^2K(y)\,dy+H_p(x,\nabla u(x,t))\cdot \nabla w(x,t)\\
+H_x(x,\nabla u(x,t))\cdot \nabla u(x,t)=\nabla f(x,t)\cdot\nabla u(x,t),
\end{multline*}
where $K$ is the kernel of the fractional Laplacian on the torus, cf \cite{RS}. Then, the central point is to test the previous identity for $w$ against the function $\rho$ solving the dual problem \eqref{eqrho}, see Lemma \ref{bocdual}, and handle all the integral terms through a delicate interplay between integrability estimates for transport equations, Sobolev and Young's inequalities. Similar duality methods were previously used in various contexts for local problems, see e.g. \cite{Gomesbook,TranBook}, and even to deduce semiconcavity and Lipschitz bounds for equations with mixed diffusion and regular data in \cite{CGsima} in the context of parabolic fractional Mean Field Games. This procedure applies, with suitable modifications, even to stationary problems of the form
\[
\mathcal{L}u+u+H(x,\nabla u)=f(x)\text{ in }\T^N
\]
through the study of the regularity properties of its dual counterpart
\[
\mathcal{L}^*\rho+\rho-\mathrm{div}(H_p(x,\nabla u)\rho)=0\text{ in }\T^N,
\]
see e.g. \cite{Goffi21dir}. Moreover, the technique of the present paper applies with few modifications to problems driven by the more general local-nonlocal operator
\[
\mathcal{L}_{A,b,c,s}u=-\mathrm{Tr}(A(x,t)D^2u)+b(x,t)\cdot Du+c(x,t)u+(-\Delta)^su,
\]
under suitable regularity assumptions on the coefficients, and even to equations with more general integro-differential operators for which a duality theory holds, cf \cite{GarroniMenaldi2}. We emphasize once more that, though  one expects the same results of the case of a local diffusion, being the dominating part of the diffusive term, obtaining a gradient estimate through the standard Bernstein method is by no means immediate even for the stationary problem involving the mixed operator.\\

We mention that refinements of the Bernstein technique in the nonlocal setting have been proposed quite recently in the manuscript \cite{CDV}, which develops the Bernstein method for integro-differential equations (even fully nonlinear) without lower-order terms, and also in \cite{Constantin,CTV15} to study gradient bounds for solutions to some different nonlocal models. Nonetheless, we believe that such formulations do not allow to treat problems with power-growth nonlinearities and $L^q$ data as those appearing in the PDEs of the present paper, being of nonvariational nature. We also mention the possible application of our techniques to study regularity properties for some nonlocal problems, where a gradient nonlinearity with polynomial growth appears, such as those arising in combustion theory \cite{Ibdah} driven by the operator $-\Delta-(-\Delta)^s$, that will be the matter of future research.\\

\textit{Plan of the paper}. Section \ref{sec;main} is devoted to state the assumptions and the main results of the paper. Section \ref{sec;prel} provides some preliminary algebraic identities to implement the Bernstein argument. Section \ref{sec;transport} concerns regularity properties of transport diffusion equations with general velocity field $b$ driven by the mixed operator $-\Delta+(-\Delta)^s$. Section \ref{sec;esthj} contains some estimates for solutions to \eqref{hjm} via the results in Section \ref{sec;transport}. Section \ref{sec;proofs} is devoted to the proof of the main result.
\section{Main result}\label{sec;main}
Throughout this manuscript, the state space will be $\T^N$, the $N$-dimensional flat torus. We denote by $L^p(\T^N)$ the space of all measurable and periodic functions on $\R^N$ belonging to $L^p_{\mathrm{loc}}(\R^N)$, endowed with the norm $\|\cdot\|_p=\|\cdot\|_{L^p((0,1)^N)}$. For positive $\mu\in\R$, $W^{\mu,p}(\T^N)$ is the standard fractional Sobolev space of functions on the flat torus, while $H_p^\mu(\T^N)$ denotes the Bessel potential space, i.e. the space of distributions $u$ such that $(I-\Delta)^\frac{\mu}{2}u\in L^p(\T^N)$.
For any time interval $(0,t) \subseteq \R$, let $Q_{t}:=\T^N\times (0, t)$. For any $p\geq1$, we denote by $W^{2,1}_p(Q_t)$ the space of functions $u$ such that $\partial_t^{r}D^{\beta}_xu\in L^p(Q)$ for all multi-indices $\beta$ and $r$ such that $|\beta|+2r\leq  2$, endowed with the norm
\begin{equation*}
\|u\|_{W^{2,1}_p(Q_t)}=\left(\iint_{Q_t}\sum_{|\beta|+2r\leq2}|\partial_t^{r}D^{\beta}_x u|^pdxdt\right)^{\frac1p}.
\end{equation*}
Similarly, the space $W^{1,0}_p(Q_t)$ is equipped with the norm
\[
\norm{u}_{W^{1,0}_p(Q_t)}:=\|u\|_{L^p(Q_t)}+\sum_{|\beta|=1}\|D_x^{\beta}u\|_{L^p(Q_t)}\ .
\]
We define the space $\mathcal{H}_p^{1}(Q_t)$ as the space of functions $u\in W^{1,0}_p(Q_t)$ with $\partial_tu\in (W^{1,0}_{p'}(Q))'$ and norm
\begin{equation*}
\|u\|_{\mathcal{H}_p^{1}(Q_t)}:=\norm{u}_{W^{1,0}_p(Q_t)}+\|\partial_tu\|_{(W^{1,0}_{p'}(Q_t))'}\ .
\end{equation*}
We consider the following Cauchy problem
\begin{equation}\label{hjm}
\begin{cases}
\partial_tu+\mathcal{L}u+H(x,\nabla u)=f(x,t)&\text{ in }Q_T,\\
u(x,0)=u_0(x)&\text{ in }\T^N\ .
\end{cases}
\end{equation}
Here, without loss of generality, we suppose $\eps,\mu=1$ and consider the diffusion operator
\[
\mathcal{L}:=-\Delta+(-\Delta)^s\ ,
\]
\[
s\in(0,1).
\]
We will also denote by $H_x$ and $H_p$ the derivatives of $H$ with respect to the first and second entry respectively. Moreover, we assume that $H\in C^1(\T^N\times\R^N)$ is convex and satisfies
\begin{equation}\label{H}\tag{H}
\begin{aligned}
& \text{there exist constants $\gamma > 1$ and $C_H>0$ such that}\\
& \qquad C_H^{-1}|p|^{\gamma}-C_H\leq H(x,p)\leq C_H(|p|^{\gamma}+1)\ ,\\
& \qquad H_p(x,p)\cdot p-H(x,p)\geq C^{-1}_H|p|^{\gamma}-C_H\ , \\
& \qquad |H_x(x,p)|\leq C_H(|p|^{\gamma}+1) \ , \\
& \qquad C^{-1}_H|p|^{\gamma-1}- {C}_H \le |H_p(x,p)|\leq C_H|p|^{\gamma-1}+{C}_H \ ,
\end{aligned}
\end{equation}
and $f$ will be some source term controlled in some space-time Lebesgue class $L^q(Q_T)$, for some suitable $q>1$. Our main result shows the preservation of the Lipschitz regularity in the equation via a quantitative bound.
\begin{thm}\label{main1}
Suppose that 
\begin{itemize}
\item[(i)] $s\in(0,1)$;
\item[(ii)] $H\in C^1(\T^N\times\R^N)$ and satisfies \eqref{H};
\item[(iii)] $f\in C^1(Q_T)$;
\item[(iii)] $u_0\in W^{1,\infty}(\T^N)$.
\end{itemize}
Let 
\begin{equation}\label{assq}
q>
\begin{cases}
N+2&\text{ if }1<\gamma\leq 3\\
\frac{(N+2)(\gamma-1)}{2}&\text{ if }\gamma>3\ .
\end{cases}
\end{equation}
Then, there exists a constant $C$ depending on $q,N,T,s,C_H,\|u_0\|_{W^{1,\infty}(\T^N)},\|f\|_{L^q(Q_T)}$ such that every smooth solution to \eqref{hjm} satisfies
\[
\|u(\cdot,t)\|_{W^{1,\infty}(\T^N)}\leq C,\ t\in[0,T].
\]
\end{thm}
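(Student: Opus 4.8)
The plan is to run the adjoint--Bernstein scheme announced in the Introduction. Since Section~\ref{sec;esthj} will provide an a priori bound $\|u\|_{L^\infty(Q_T)}\le C$, it suffices to bound $w(x_0,\tau):=\tfrac12|\nabla u(x_0,\tau)|^2$ by a constant independent of $(x_0,\tau)\in\T^N\times(0,T]$. I fix such a point and let $\rho$ solve the dual problem \eqref{eqrho} on $Q_\tau$ with $\rho_\tau$ a smooth probability density mollifying $\delta_{x_0}$; every bound below will be uniform in the mollification, which I let collapse to $\delta_{x_0}$ only at the end. Since $\mathcal{L}^*=\mathcal{L}$ preserves mass on the torus and the term $-\mathrm{div}(H_p(x,\nabla u)\rho)$ is in divergence form, $\rho\ge0$ and $\int_{\T^N}\rho(\cdot,t)\,dx=1$ for all $t\in[0,\tau]$. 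Throughout $C$ denotes a constant depending only on the quantities in the statement and changing from line to line.

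\textbf{A priori inputs.} I use the a priori estimates of Section~\ref{sec;esthj}: besides $\|u\|_{L^\infty(Q_T)}\le C$, an integral gradient bound $\|\nabla u\|_{L^a(Q_T)}\le C$ for a suitable exponent $a=a(N,\gamma,q)$. By \eqref{H} the velocity field $b:=-H_p(\cdot,\nabla u)$ of \eqref{eqrho} obeys $|b|\le C_H(|\nabla u|^{\gamma-1}+1)$, and the role of \eqref{assq} --- this is where the split between $1<\gamma\le3$ and $\gamma>3$ appears --- is to make $a$ large enough that $b$ lies in a \emph{subcritical} Lebesgue class $L^r(Q_\tau)$ with $r>N+2$, with norm independent of $\|\nabla u\|_{L^\infty}$. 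The transport--diffusion estimates of Section~\ref{sec;transport} for the mixed operator $-\Delta+(-\Delta)^s$ (whose nonlocal part is dominated by $-\Delta$ and thus enters only perturbatively), together with $\int\rho_\tau=1$, then yield $\|\rho\|_{L^m(Q_\tau)}\le C$ for every $m<\tfrac{N+2}{N}$ and $\|\nabla\rho\|_{L^m(Q_\tau)}\le C$ for every $m<\tfrac{N+2}{N+1}$, with $C$ not involving $\|\nabla u\|_{L^\infty}$. As a first consequence, testing \eqref{hjm} against $\rho$ and using the coercivity $H_p(x,p)\cdot p-H(x,p)\ge C_H^{-1}|p|^\gamma-C_H$ from \eqref{H} and the $L^\infty$ bound on $u$, I record
\[
\iint_{Q_\tau}|\nabla u|^\gamma\rho\,dx\,dt\le C.
\]

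\textbf{The adjoint--Bochner identity.} By Lemma~\ref{bocdual}, pairing the (nonlinear, nonlocal, variational) Bochner identity satisfied by $w$ with $\rho$ and using that $\rho$ solves \eqref{eqrho} annihilates the terms where the adjoint operator hits $\rho$, leaving
\begin{multline*}
w(x_0,\tau)+\iint_{Q_\tau}|D^2u|^2\rho\,dx\,dt+\frac12\iint_{Q_\tau}\int_{\T^N}|\nabla u(x,t)-\nabla u(x+y,t)|^2K(y)\,dy\,\rho\,dx\,dt\\
=\int_{\T^N}w(x,0)\rho(x,0)\,dx-\iint_{Q_\tau}H_x(x,\nabla u)\cdot\nabla u\,\rho\,dx\,dt+\iint_{Q_\tau}\nabla f\cdot\nabla u\,\rho\,dx\,dt.
\end{multline*}
The two integral terms on the left are non-negative; besides being dropped, the first will also be used to absorb a contribution of the source.

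\textbf{Estimating the right-hand side and conclusion.} First, $\int_{\T^N}w(x,0)\rho(x,0)\,dx\le\tfrac12\|\nabla u_0\|_{L^\infty(\T^N)}^2$ by mass conservation. Second, $|H_x(x,\nabla u)\cdot\nabla u|\le C(|\nabla u|^{\gamma+1}+1)$ by \eqref{H}, so H\"older's inequality against $\|\rho\|_{L^m}$, using the $L^\gamma(\rho\,dx\,dt)$ bound above together with a slice of the $L^a$ bound on $\nabla u$, gives $\bigl|\iint_{Q_\tau}H_x\cdot\nabla u\,\rho\bigr|\le C$. Third, integrating by parts in $x$, $\iint_{Q_\tau}\nabla f\cdot\nabla u\,\rho=-\iint_{Q_\tau}f\,\Delta u\,\rho-\iint_{Q_\tau}f\,\nabla u\cdot\nabla\rho$; for the first piece, Young's inequality and $|\Delta u|^2\le N|D^2u|^2$ give $\bigl|\iint f\Delta u\,\rho\bigr|\le\tfrac12\iint|D^2u|^2\rho+C\iint f^2\rho$, the first term being absorbed on the left and $\iint_{Q_\tau}f^2\rho\le\|f\|_{L^q(Q_T)}^2\|\rho\|_{L^{(q/2)'}(Q_\tau)}\le C$ precisely because $q>N+2$ forces $(q/2)'<\tfrac{N+2}{N}$; for the second piece, H\"older's inequality with exponents $q,a,m$ whose reciprocals sum to $1$ (arranged by \eqref{assq}) gives $\bigl|\iint f\nabla u\cdot\nabla\rho\bigr|\le\|f\|_{L^q}\|\nabla u\|_{L^a}\|\nabla\rho\|_{L^m}\le C$. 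Plugging all of this in yields $w(x_0,\tau)\le C$ uniformly in $(x_0,\tau)$ and in the mollification; letting the latter collapse to $\delta_{x_0}$ and taking the supremum over $x_0,\tau$ gives $\|\nabla u(\cdot,t)\|_{L^\infty(\T^N)}\le C$, and with $\|u\|_{L^\infty(Q_T)}\le C$ this is the claim. The delicate point --- and the step I expect to be the main obstacle --- is the bookkeeping of exponents in the last two phases: one must verify that the integral gradient bound available for $u$ is simultaneously strong enough (i) to place the drift $b$ in a subcritical space $L^r$, $r>N+2$, so that the transport estimates deliver $\rho$ and $\nabla\rho$ in the stated Lebesgue classes with constants free of $\|\nabla u\|_{L^\infty}$, and (ii) to make the H\"older pairings with $|\nabla u|^{\gamma+1}$, $f^2$, and $f\,\nabla u\cdot\nabla\rho$ admissible, and that \eqref{assq} --- including the exact change of régime at $\gamma=3$ --- is the sharp threshold for which all of this closes; the nonlocal diffusion, being non-negative in the Bochner identity and dominated by $-\Delta$ in every parabolic estimate, contributes no additional difficulty.
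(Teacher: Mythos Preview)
Your overall architecture matches the paper: test the nonlinear Bochner identity for $w=\tfrac12|\nabla u|^2$ against the adjoint variable $\rho$, drop the nonlocal square term, integrate the $\nabla f\cdot\nabla u\,\rho$ contribution by parts into $-f\Delta u\,\rho-f\,\nabla u\cdot\nabla\rho$, and absorb $f\Delta u\,\rho$ via Young and $|D^2u|^2\rho$. The initial-data term and the mass conservation are also handled exactly as in the paper.

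The gap is in your ``A priori inputs'' paragraph. You assume an \emph{unweighted} bound $\|\nabla u\|_{L^a(Q_T)}\le C$ for $a$ so large that the drift $b=-H_p(x,\nabla u)$ lands in a subcritical class $L^r$, $r>N+2$, with norm \emph{independent of} $\|\nabla u\|_{L^\infty}$; from this you deduce $\|\rho\|_{L^m},\|\nabla\rho\|_{L^m}\le C$ free of $\|\nabla u\|_{L^\infty}$. Section~\ref{sec;esthj} does not supply such a bound: Proposition~\ref{cross} gives only the \emph{weighted} estimate $\iint_{Q_\tau}|\nabla u|^k\rho\le C$ for $k\le\gamma$, and no unweighted $L^a$ control of $\nabla u$ with $a>(N+2)(\gamma-1)$ is available short of the very Lipschitz bound you are trying to prove. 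Without it your H\"older pairing $\|f\|_{L^q}\|\nabla u\|_{L^a}\|\nabla\rho\|_{L^m}$ is vacuous, and likewise your claim that $\iint H_x\cdot\nabla u\,\rho\le C$.

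The paper's remedy is precisely \emph{not} to seek $\rho$-bounds independent of $\|\nabla u\|_{L^\infty}$. Theorem~\ref{mainberndual} shows instead that
\[
\|\rho\|_{\mathcal{H}^1_{q'}(Q_\tau)}\le C\bigl(\|\nabla u\|_{L^\infty(Q_\tau)}^{1-\theta}+1\bigr)
\]
for some $\theta\in(0,1)$ (this is where \eqref{assq} enters, through $m'=1+\tfrac{N+2}{q}$ and the requirement $(\gamma-1)m'<\gamma+1$). Each right-hand term is then estimated by $C+\tfrac18\|\nabla u\|_{L^\infty(Q_\tau)}^2$: for $H_x$, pull out one factor $\|\nabla u\|_{L^\infty}$ and use Proposition~\ref{cross}; for $\iint f^2\rho$, use Corollary~\ref{corp} to get $\|\rho\|_{L^{\tilde p'}}\le C(\|\nabla u\|_{L^\infty}^{1-\theta}+1)$; for $\iint f\,\nabla u\cdot\nabla\rho$, bound by $\|\nabla u\|_{L^\infty}\|f\|_{L^q}\|\rho\|_{\mathcal{H}^1_{q'}}\le C\|\nabla u\|_{L^\infty}^{2-\theta}+C$. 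One arrives at
\[
\tfrac12\|\nabla u(\cdot,\tau)\|_{L^\infty}^2\le C+\tfrac38\|\nabla u\|_{L^\infty(Q_\tau)}^2,
\]
and taking the supremum over $\tau$ closes the estimate. The mechanism is a buckling argument, not a direct bound; your sketch misses this and replaces it with an input that is not available.
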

Let us stress that, though we require $f\in C^1(Q_T)$, the gradient bound depends only on the summability of $f\in L^q_{x,t}$, so it can be regarded as an a priori estimate. In particular, one can avoid to impose $f\in C^1(Q_T)$ implementing a scheme for strong solutions belonging to $W^{2,1}_q(Q_T)=\{\partial_t u,u,\nabla u,D^2u\in L^q(Q_T)\}$: this can be done using a test function argument as detailed in Remark \ref{test}, cf \cite{CGell}, or using a procedure through difference quotients \cite{cg20}, which in a sense avoid the differentiation of the equation.
It is worth remarking that when the source term is essentially bounded, i.e. $f\in L^\infty(Q_T)$, the condition on the summability \eqref{assq} holds, and the results appear to be new even in this framework. Indeed, as already discussed, Lipschitz bounds from the theory of viscosity solutions require the right-hand side of the equation to be at least continuous and time-independent or even more regular (e.g. Lipschitz), so that an estimate on $\partial_t u$ readily follows by the maximum principle. We finally emphasize that a regularity estimate starting from a continuous initial datum and suitable weak solutions can be obtained working at the level of difference quotients, as already done first in \cite{cg20} for Lipschitz regularity and then in \cite{CGpar,Goffi21JEE} for H\"older regularization properties.\\
We conclude by saying that this nonlinear duality method readily leads to a new proof of the gradient bound for strong solutions to an equation without the local diffusion term (i.e. with $\eps=0$) when one imposes $f\in L^\infty(0,T;W^{1,\infty}(\T^N))$, which is a classical assumption in the theory of viscosity solutions. This provides an alternative method of proof of the gradient bounds in \cite[Section 3]{BLT17} (although in a stronger framework than the viscosity one and for viscous problems). Still, even when $f\in L^q(0,T;W^{1,q}(\T^N))$ the results would be new, as the assumptions on the right-hand side can be considered as intermediate between those typically assumed to implement the weak Bernstein argument and the Ishii-Lions method, see Remark \ref{rem;higher}.
\section{Preliminary results}\label{sec;prel}
For $s\in(0,1)$ and $u\in C^\infty(\T^N)$ we recall that the following pointwise identity holds (cf \cite{RS})
\[
(-\Delta)^s u=\text{P.V.}\int_{\T^N}(u(x)-u(x+y))K(y)\,dy,
\]
where 
\[
K(y)=c_{N,s}\sum_{k\in\Z^N}\frac{1}{|x-2\pi k|^{N+2s}}, x\in\T^N\ ,x\neq0
\]
and $c_{N,s}$ is a normalizing positive constant depending on $N,s$, see \cite{RS} for the definition. The main result needed to implement our Bernstein-type estimate is the following:
\begin{prop}\label{eqw}
If $u$ is a smooth solution to \eqref{hjm}, then the evolution of $w=\frac12|\nabla u(x,t)|^2$ is described by the equation
\begin{multline}\label{eqwid}
\partial_t w(x,t)-\Delta w+(-\Delta)^sw+|D^2u(x,t)|^2+\frac{1}{2}\int_{\Omega}|\nabla u(x,t)-\nabla u(x+y,t)|^2K(y)\,dy\\
+H_p(x,\nabla u(x,t))\cdot \nabla w(x,t)+H_x(x,\nabla u(x,t))\cdot \nabla u(x,t)=\nabla f(x,t)\cdot\nabla u(x,t)\text{ in }\Omega\times(0,T)
\end{multline}
equipped with the initial condition $w(x,0)=\frac12|\nabla u(x,0)|^2$, where $\Omega=\T^N$ or $\R^N$ and $|D^2u(x,t)|^2=\sum_{i,j=1}^N(\partial_{x_ix_j}u)^2$.
\end{prop}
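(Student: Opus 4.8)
The plan is to obtain \eqref{eqwid} by the standard Bernstein device: differentiate \eqref{hjm} in each space variable, multiply by the corresponding first-order derivative of $u$, and sum over directions, treating the nonlocal diffusion by a pointwise algebraic identity. Since $u$ is smooth, all manipulations — differentiating under the principal-value integral defining $(-\Delta)^s$, commuting $\partial_{x_k}$ with $-\Delta$ and with $(-\Delta)^s$, and interchanging the finite sum with the singular integral — are legitimate.

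First I would fix $k$, set $u_k:=\partial_{x_k}u$, and differentiate \eqref{hjm} in $x_k$. Using the chain rule $\partial_{x_k}[H(x,\nabla u)]=H_{x_k}(x,\nabla u)+H_p(x,\nabla u)\cdot\nabla u_k$ (the last term since $\partial_{x_k}\partial_{x_i}u=\partial_{x_i}u_k$), one gets the linear equation
\[
\partial_t u_k-\Delta u_k+(-\Delta)^su_k+H_p(x,\nabla u)\cdot\nabla u_k+H_{x_k}(x,\nabla u)=\partial_{x_k}f .
\]
Multiplying by $u_k$ and summing over $k=1,\dots,N$: the parabolic term yields $\partial_tw$; since $\nabla w=\sum_ku_k\nabla u_k$, the first-order terms produce $H_p(x,\nabla u)\cdot\nabla w$, $H_x(x,\nabla u)\cdot\nabla u$ and $\nabla f\cdot\nabla u$ respectively; and the local diffusion is handled by the classical Bochner identity $\Delta w=|D^2u|^2+\sum_ku_k\,\partial_{x_k}(\Delta u)$ recalled in the introduction, so that $-\sum_ku_k\Delta u_k=-\Delta w+|D^2u|^2$.

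The only step requiring genuine care is the nonlocal term $\sum_ku_k(x)(-\Delta)^su_k(x)$. The key is the elementary identity $\tfrac12|a|^2-\tfrac12|b|^2=a\cdot(a-b)-\tfrac12|a-b|^2$ for $a,b\in\R^N$, applied with $a=\nabla u(x)$, $b=\nabla u(x+y)$ inside the singular integral defining $(-\Delta)^sw(x)=\mathrm{P.V.}\int_\Omega\big(w(x)-w(x+y)\big)K(y)\,dy$. This splits it into a principal-value term $\nabla u(x)\cdot\mathrm{P.V.}\int_\Omega(\nabla u(x)-\nabla u(x+y))K(y)\,dy=\nabla u(x)\cdot(-\Delta)^s\nabla u(x)=\sum_ku_k(x)(-\Delta)^su_k(x)$ plus the remainder $-\tfrac12\int_\Omega|\nabla u(x)-\nabla u(x+y)|^2K(y)\,dy$, the latter now an absolutely convergent integral because $|\nabla u(x)-\nabla u(x+y)|^2=O(|y|^2)$ as $y\to0$ while $K(y)\le C|y|^{-N-2s}$ near the origin with $s<1$ ($K$ being smooth away from $2\pi\Z^N$). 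Rearranging gives
\[
\sum_ku_k(x)(-\Delta)^su_k(x)=(-\Delta)^sw(x)+\frac12\int_\Omega|\nabla u(x)-\nabla u(x+y)|^2K(y)\,dy .
\]

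Collecting the three contributions reproduces \eqref{eqwid}, and the initial condition is immediate from $w=\tfrac12|\nabla u|^2$. The argument is insensitive to whether $\Omega=\T^N$ or $\R^N$: in the latter case one simply uses $K(y)=c_{N,s}|y|^{-N-2s}$ and assumes enough decay on $u$ for $(-\Delta)^su$ and $(-\Delta)^sw$ to be well defined. I expect no serious obstacle beyond the bookkeeping around the principal value and the convergence of the remainder integral in the nonlocal step.
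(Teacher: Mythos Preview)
Your proposal is correct and follows essentially the same route as the paper: differentiate \eqref{hjm} in each coordinate, multiply by $\partial_{x_k}u$, sum, and handle the nonlocal term via the pointwise identity $\tfrac12(v^2(x)-v^2(x+y))=v(x)(v(x)-v(x+y))-\tfrac12(v(x)-v(x+y))^2$ applied to each $v=\partial_{x_k}u$, which is exactly your $\tfrac12|a|^2-\tfrac12|b|^2=a\cdot(a-b)-\tfrac12|a-b|^2$ written componentwise. The paper packages the diffusive computation into the preceding lemma (the mixed Bochner identity \eqref{bocmix}) but the content is identical; your extra remarks on the absolute convergence of the remainder integral and on the $\R^N$ case are fine additions.
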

Before proving this, we need the following Bochner (pointwise) identity for the mixed local-nonlocal operator $-\Delta+(-\Delta)^s$. This extends an identity already pointed out in \cite[equation (2.10)]{Constantin}.
\begin{lemma}
Let $w(x,t)=\frac{1}{2}|\nabla u(x,t)|^2$. Then $w$ satisfies the identity
\begin{multline}\label{bocmix}
\Delta w(x,t)-(-\Delta)^s w(x,t)=\nabla u(x,t)\cdot \nabla (\Delta u(x,t))+\nabla u(x,t)\cdot \nabla (-(-\Delta)^s u(x,t))\\
+|D^2u(x,t)|^2+\frac{1}{2}\int_{\Omega}|\nabla u(x,t)-\nabla u(x+y,t)|^2K(y)\,dy,
\end{multline}
where $\Omega=\T^N$ or $\R^N$. 
\end{lemma}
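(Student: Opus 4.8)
The plan is to split the mixed operator and treat the local and nonlocal pieces separately, since $\Delta w - (-\Delta)^s w = \Delta w + \bigl(-(-\Delta)^s w\bigr)$ and the two contributions on the right-hand side match exactly. For the local piece I would simply verify the classical Bochner identity by direct differentiation: writing $w=\tfrac12\sum_k(\partial_{x_k}u)^2$, one has $\partial_{x_i}w=\sum_k \partial_{x_k}u\,\partial_{x_ix_k}u$, hence $\partial_{x_ix_i}w=\sum_k(\partial_{x_ix_k}u)^2+\sum_k\partial_{x_k}u\,\partial_{x_ix_ix_k}u$, and summing over $i$ yields $\Delta w=|D^2u|^2+\nabla u\cdot\nabla(\Delta u)$. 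This accounts for the terms $|D^2u(x,t)|^2$ and $\nabla u(x,t)\cdot\nabla(\Delta u(x,t))$ in \eqref{bocmix}.

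For the nonlocal piece I would start from the pointwise representation recalled above, $(-\Delta)^sg(x)=\mathrm{P.V.}\int_\Omega(g(x)-g(x+y))K(y)\,dy$, applied to $g=w=\tfrac12|\nabla u|^2$, giving
\[
-(-\Delta)^sw(x,t)=\frac12\,\mathrm{P.V.}\int_{\Omega}\bigl(|\nabla u(x+y,t)|^2-|\nabla u(x,t)|^2\bigr)K(y)\,dy.
\]
The key step is the elementary vector identity $|a|^2-|b|^2=|a-b|^2+2\,b\cdot(a-b)$ with $a=\nabla u(x+y,t)$, $b=\nabla u(x,t)$, which splits the integrand into a quadratic part $|\nabla u(x+y,t)-\nabla u(x,t)|^2$ and a part linear in the increment, $2\,\nabla u(x,t)\cdot\bigl(\nabla u(x+y,t)-\nabla u(x,t)\bigr)$. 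The quadratic part is integrable near $y=0$ (it is $O(|y|^{2})$ against a kernel that is $O(|y|^{-N-2s})$, so no principal value is needed) and produces $\tfrac12\int_\Omega|\nabla u(x,t)-\nabla u(x+y,t)|^2K(y)\,dy$. Pulling the fixed vector $\nabla u(x,t)$ out of the remaining principal value, the linear part equals $\nabla u(x,t)\cdot\mathrm{P.V.}\int_\Omega(\nabla u(x+y,t)-\nabla u(x,t))K(y)\,dy=-\nabla u(x,t)\cdot(-\Delta)^s\nabla u(x,t)$, and since $(-\Delta)^s\nabla u=\nabla\bigl((-\Delta)^su\bigr)$ for smooth periodic $u$, this is precisely $\nabla u(x,t)\cdot\nabla\bigl(-(-\Delta)^su(x,t)\bigr)$. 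Adding the two pieces gives \eqref{bocmix}.

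The only genuine point requiring care — and the main obstacle, such as it is — is the justification of the principal-value manipulations: that the P.V. integral defining $(-\Delta)^sw$ converges, that it may be separated into the unconditionally convergent quadratic term plus the P.V. of the linear term, and that the constant vector $\nabla u(x,t)$ may be taken outside the P.V.\ to identify it with $(-\Delta)^s\nabla u$. All of this rests on the smoothness of $u$ (so that increments of $\nabla u$ are $O(|y|)$, with second-order Taylor cancellation available for the oddness of the first-order term) together with the evenness $K(y)=K(-y)$ of the kernel, which is exactly what makes the $\mathrm{P.V.}$ well defined and commuting with linear operations; all the limits are then standard and I would state them with a one-line dominated-convergence remark rather than belaboring them. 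The identity on $\R^N$ versus $\T^N$ is handled uniformly, the only difference being the explicit form of $K$.
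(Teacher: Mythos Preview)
Your proof is correct and follows essentially the same route as the paper: the local piece is handled by exactly the same direct differentiation, and for the nonlocal piece the paper invokes the scalar identity $\tfrac12\Delta^s(v^2)=v\,\Delta^s v+\tfrac12\int_\Omega(v(x)-v(x+y))^2K(y)\,dy$ (citing \cite{CTV15,GarofaloFrac}) applied to each $v=\partial_{x_j}u$ and then sums, which is precisely your computation with the identity $|a|^2-|b|^2=|a-b|^2+2\,b\cdot(a-b)$ carried out directly at the vector level. Your version is slightly more self-contained in that it spells out the algebra and the principal-value justification rather than citing, but the argument is the same.
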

\begin{proof}
Standard algebraic computations give
\[
\partial_{x_j}w=\sum_{i=1}^N\partial_{x_i}u\partial_{x_ix_j}u\ ;\partial_{x_jx_j}w=\sum_{i=1}^N[(\partial_{x_ix_j}u)^2+\partial_{x_i}u\partial_{x_ix_jx_j}u],
\]
so, summing over $j$ we have
\[
\Delta w(x,t)=|D^2u(x,t)|^2+\nabla u(x,t)\cdot \nabla \Delta u(x,t).
\]
Moreover, from \cite[Proposition 2.1]{CTV15} or \cite[Lemma 20.2]{GarofaloFrac} for a sufficiently smooth function $v$, we have
\[
\frac12\Delta^s(v^2(x,t))=v(x,t)\Delta^s v(x,t)+\frac{1}{2}\int_{\Omega}(v(x,t)-v(x+y,t))^2K(y)\,dy.
\]
Applying the above identity to any directional derivative $v=\partial_e u$, we get the conclusion after summing the obtained expressions.
\end{proof}
\begin{proof}[Proof of Proposition \ref{eqw}]
It is enough to differentiate \eqref{hjm} with respect to $x_j$, multiply the resulting equation by $\partial_{x_j}u$, taking the sums for $j=1,...,N$ and finally use \eqref{bocmix}.
\end{proof}
We end this section with a maximal $L^q$-regularity property for the heat equation with mixed diffusion.
\begin{lemma}\label{czmix}
Let $q>1$, $V\in L^q(Q_T)$ and $u_0\in W^{2-\frac{2}{q},q}(Q_T)$. Then, there exists a unique strong solution $u\in W^{2,1}_q(Q_T)$ of the evolution problem
\begin{equation*}
\begin{cases}
\partial_tw-\Delta w+(-\Delta)^sw=V(x,t)&\text{ in }Q_T:=\T^N\times(0,T),\\
w(x,0)=w_0(x)&\text{ in }\T^N\ .
\end{cases}
\end{equation*}
Moreover, the following estimate holds
\[
\|w\|_{W^{2,1}_q(Q_T)}\leq C(\|V\|_{L^q(Q_T)}+\|w_0\|_{W^{2-\frac{2}{q},q}(Q_T)}),
\]
where $C$ depends on $s,T,q,N$ and remains bounded for bounded values of $T$.
\end{lemma}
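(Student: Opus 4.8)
The plan is to obtain the result as a perturbation of the classical maximal $L^q$-regularity (Calderón--Zygmund) theory for the heat equation $\partial_t w-\Delta w=g$ on the torus. First I would recall that for the pure heat operator one has, for every $g\in L^q(Q_T)$ and $w_0\in W^{2-2/q,q}(\T^N)$, a unique strong solution $v\in W^{2,1}_q(Q_T)$ with $\|v\|_{W^{2,1}_q(Q_T)}\le C(\|g\|_{L^q(Q_T)}+\|w_0\|_{W^{2-2/q,q}(\T^N)})$, the constant staying bounded for bounded $T$; this is the standard parabolic $L^p$ theory (e.g. Ladyzhenskaya--Solonnikov--Ural'tseva). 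The point is then that the extra term $(-\Delta)^s w$ is of lower order with respect to $-\Delta$, so it should be absorbable.

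The natural way to make this precise is via a fixed point / continuity argument. Rewrite the equation as $\partial_t w-\Delta w = V-(-\Delta)^s w$ and consider the map $T:\,\tilde w\mapsto w$ where $w$ solves $\partial_t w-\Delta w = V-(-\Delta)^s\tilde w$ with the given initial datum. By the heat estimate, $T$ is well-defined from $W^{2,1}_q(Q_T)$ into itself, and for two inputs $\|T\tilde w_1-T\tilde w_2\|_{W^{2,1}_q(Q_T)}\le C\|(-\Delta)^s(\tilde w_1-\tilde w_2)\|_{L^q(Q_T)}$. The key interpolation inequality I would invoke is that, for $s\in(0,1)$, the fractional Laplacian is dominated by the full Laplacian with a small constant and a lower-order remainder: for every $\delta>0$ there is $C_\delta$ with
\[
\|(-\Delta)^s \varphi\|_{L^q(\T^N)}\le \delta\,\|D^2\varphi\|_{L^q(\T^N)}+C_\delta\,\|\varphi\|_{L^q(\T^N)},
\]
which follows from the boundedness of the Bessel-potential/Riesz-type multiplier $(-\Delta)^s(I-\Delta)^{-1}$ on $L^q(\T^N)$ (Mikhlin--Hörmander) together with an $\eps$-Young splitting. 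Applying this on each time slice and integrating in $t$ gives $\|(-\Delta)^s\psi\|_{L^q(Q_T)}\le\delta\|\psi\|_{W^{2,1}_q(Q_T)}+C_\delta\|\psi\|_{L^q(Q_T)}$. Choosing $\delta$ small, the map $T$ becomes a contraction after combining with a Grönwall/bootstrap absorption of the zero-order term over the time interval (or on a short interval and then iterating up to $T$, which keeps constants bounded for bounded $T$); this yields both existence and uniqueness of $w\in W^{2,1}_q(Q_T)$ and the claimed estimate. Uniqueness can alternatively be seen directly: the difference of two solutions solves the homogeneous problem, and testing against a suitable dual function (or using the maximum principle, valid for $-\Delta+(-\Delta)^s$) forces it to vanish.

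The main obstacle is the interpolation/absorption step: one has to verify that $(-\Delta)^s$ genuinely has order $2s<2$ relative to $-\Delta$ in the $L^q$ scale on the torus, i.e. that the multiplier $|k|^{2s}/(1+|k|^2)$ (or its periodic analogue built from the kernel $K$) satisfies the Mikhlin--Hörmander conditions so that the above $\eps$-inequality holds with the precise interplay between the small constant $\delta$ and the lower-order constant $C_\delta$, and then to track that the resulting constant in the final estimate depends on $s,T,q,N$ only and stays bounded as $T$ stays bounded. Once this perturbative inequality is in hand, the rest is a routine fixed-point argument, so I expect the proof to be short.
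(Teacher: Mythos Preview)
Your proposal is correct and follows essentially the same approach as the paper: both set up the contraction map $z\mapsto w$ solving $\partial_t w-\Delta w=V-(-\Delta)^s z$ with the given initial datum, use the interpolation inequality $\|(-\Delta)^s z\|_{L^q}\le \delta\|z\|_{W^{2,1}_q}+C_\delta\|z\|_{L^q}$ (the paper cites \cite[Lemma~2.4]{CGsima} rather than Mikhlin--H\"ormander directly), and absorb the zero-order term by working on a short time interval and then iterating. The paper handles the absorption of $\|z\|_{L^q}$ explicitly by writing $z(\cdot,s)=w_0+\int_0^s\partial_t z$, which is a concrete realization of the ``short-time then iterate'' step you describe.
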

\begin{proof}
We use a contraction mapping argument on the space $\mathcal{C}:=\{u\in W^{2,1}_q(Q_T):u(0)=u_0\}$, following \cite[Theorem 3.7]{GarroniMenaldi2}. For fixed $z\in W^{2,1}_q(Q_T)$, we consider the map $\Psi$ that sends $z$ into the solution of the problem
\[
\begin{cases}
\partial_tw-\Delta w=V(x,t)-(-\Delta)^sz&\text{ in }Q_T:=\T^N\times(0,T),\\
w(x,0)=w_0(x)&\text{ in }\T^N\ .
\end{cases}
\]
Applying interpolation inequalities, see e.g. \cite[Lemma 2.4]{CGsima}, we have for $\delta>0$
\[
\|(-\Delta)^sz\|_{L^q(Q_T)}\leq\delta \|z\|_{W^{2,1}_q(Q_T)}+C(\delta)\|z\|_{L^q(Q_T)}, 
\]
where $C(\delta)$ grows as $\delta\to0$. We now write
\[
z(\cdot,s)=w_0(\cdot)+\int_0^s\partial_t z(\cdot,\omega)\,d\omega
\]
and get by the H\"older's inequality
\[
\|z\|_{L^q(Q_T)}\leq \tau^\frac1q\|w_0\|_{L^q(\T^N)}+\tau\|\partial_tz\|_{L^q(Q_T)}.
\]
Applying maximal $L^q$-regularity for the heat equation with frozen right-hand side we get
\[
\|w\|_{W^{2,1}_q(Q_T)}\leq C(\|V\|_{L^q(Q_T)}+\|(-\Delta)^sz\|_{L^q(Q_T)}+\|w_0\|_{W^{2-\frac{2}{q},q}(Q_T)}),
\]
and then one finds that $\Psi$ is a contraction on $\mathcal{C}$, as done in \cite{GarroniMenaldi2}, by taking first $\delta$ small and then $T\leq T_0$ small enough. Applying the same procedure a finite number of steps, one proves the result for any fixed $T$.
\end{proof}
\section{A priori estimates for general transport equations with mixed local and nonlocal diffusion}\label{sec;transport}
In this section we focus on the following backward problem driven by a general vector field $b=b(x,t)$
\begin{equation}\label{fpm}
\begin{cases}
-\partial_t\rho-\Delta \rho+(-\Delta)^s \rho+\mathrm{div}(b(x,t)\rho)=0&\text{ in }Q_\tau,\\
\rho(x,\tau)=\rho_\tau(x)&\text{ in }\T^N\ .
\end{cases}
\end{equation}
We will also assume that $\rho_\tau\in C^\infty(\T^N)$, $\rho_\tau\geq0$ and $\|\rho_\tau\|_{L^1(\T^N)}=1$. We mainly describe the a priori estimates needed to run our Bernstein argument, without discussing the existence and uniqueness of (weak) solutions to such problem, which however is well-known due to the presence of the heat operator, even when $b$ belongs to the Aronson-Serrin interpolated condition $b\in L^r_t(L^z_x)$, $\frac{N}{2r}+\frac{1}{z}\leq\frac12$, see e.g. \cite[Remark 3.7]{GarroniMenaldi2}. Thus, from now on, we will consider classical solutions, even though the argument can be made rigorous for weak energy solutions \cite{cg20}. \\
The next is a maximal regularity estimate for solutions to \eqref{fpm} in Lebesgue spaces obtained in terms of terminal data belonging to $L^1(\T^N)$.
\begin{prop}\label{prop1}
Let $\rho$ be the nonnegative solution to \eqref{fpm} and let
\[
1<q'<\frac{N+2}{N+1}\ .
\]
Then there exists $C>0$ depending on $\sigma',N,s,T$ such that
\[
\|\rho\|_{\mathcal{H}_{q'}^1(Q_\tau)}\leq C(\|b\rho\|_{L^{q'}(Q_\tau)}+\|\rho\|_{L^{q'}(Q_\tau)}+\|\rho_\tau\|_{L^1(\T^N)})\ .
\]
\end{prop}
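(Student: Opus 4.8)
The plan is to obtain the maximal regularity estimate for \eqref{fpm} by treating the nonlocal term $(-\Delta)^s\rho$ perturbatively and relying on maximal $L^{q'}$-regularity (more precisely, the $L^1$-data estimate in the spirit of the Aronson--Serrin / Boccardo--Gallou\"et theory) for the \emph{local} Fokker--Planck operator $-\partial_t\rho-\Delta\rho+\mathrm{div}(b\rho)$. First I would recall the known estimate for the purely local backward problem
\[
\begin{cases}
-\partial_t\rho-\Delta\rho+\mathrm{div}(b\rho)=\mathrm{div}(F)+g & \text{ in }Q_\tau,\\
\rho(x,\tau)=\rho_\tau(x) & \text{ in }\T^N,
\end{cases}
\]
which, for terminal data merely in $L^1(\T^N)$, gives $\rho\in\mathcal{H}^1_{q'}(Q_\tau)$ with
\[
\|\rho\|_{\mathcal{H}^1_{q'}(Q_\tau)}\le C\big(\|F\|_{L^{q'}(Q_\tau)}+\|g\|_{L^{q'}(Q_\tau)}+\|\rho_\tau\|_{L^1(\T^N)}\big),
\]
valid precisely in the subcritical range $1<q'<\frac{N+2}{N+1}$ (this is the dual exponent range to $q>N+2$, and is exactly where the heat semigroup maps $L^1$ into $L^{q'}$ with integrable-in-time $W^{1,q'}$ bounds, via $\|e^{t\Delta}\delta_0\|_{W^{1,q'}}\sim t^{-\frac{N+1}{2}+\frac{N}{2q'}}$). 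I would cite \cite{GarroniMenaldi2} (Remark 3.7 and the surrounding results) for this.

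Next I would rewrite \eqref{fpm} as the local problem above with $F=b\rho$ and $g=-(-\Delta)^s\rho$, so that the local estimate yields
\[
\|\rho\|_{\mathcal{H}^1_{q'}(Q_\tau)}\le C\big(\|b\rho\|_{L^{q'}(Q_\tau)}+\|(-\Delta)^s\rho\|_{L^{q'}(Q_\tau)}+\|\rho_\tau\|_{L^1(\T^N)}\big).
\]
The term $\|(-\Delta)^s\rho\|_{L^{q'}(Q_\tau)}$ is then absorbed exactly as in the proof of Lemma \ref{czmix}: by the interpolation inequality $\|(-\Delta)^s z\|_{L^{q'}(Q_\tau)}\le\delta\|z\|_{W^{2,1}_{q'}(Q_\tau)}+C(\delta)\|z\|_{L^{q'}(Q_\tau)}$ (see \cite[Lemma 2.4]{CGsima}), together with the fact that on the space $\mathcal H^1_{q'}$ the relevant norm controls what we need — here one must be slightly careful, since $\mathcal H^1_{q'}$ is weaker than $W^{2,1}_{q'}$, so I would instead use a fractional interpolation of the form $\|(-\Delta)^s z\|_{L^{q'}}\le\delta\|z\|_{\mathcal H^1_{q'}}+C(\delta)\|z\|_{L^{q'}}$ valid for $s<1/2$, or more robustly run the absorption at the level of a regularized/truncated problem where the full $W^{2,1}_{q'}$ bound is available and then pass to the limit. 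Choosing $\delta$ small absorbs the $\delta$-term into the left-hand side; the remaining lower-order term $C(\delta)\|\rho\|_{L^{q'}(Q_\tau)}$ is kept on the right, which is why it appears in the statement. A standard finite-iteration argument over subintervals $[\tau-T_0,\tau]$ of small length $T_0$ (as at the end of the proof of Lemma \ref{czmix}) then upgrades the local-in-time estimate to one on all of $Q_\tau$, with constant depending only on $q',N,s,T$ and bounded for bounded $T$.

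The main obstacle I anticipate is twofold. First, justifying the absorption of $\|(-\Delta)^s\rho\|_{L^{q'}}$ cleanly: the natural a priori bound for the local Fokker--Planck problem with $L^1$ terminal data lives in $\mathcal H^1_{q'}$, not $W^{2,1}_{q'}$, so the interpolation inequality used in Lemma \ref{czmix} does not directly apply and one needs either the restriction $s<1/2$ (so that $(-\Delta)^s$ is controlled by $\nabla$, hence by the $W^{1,0}_{q'}$ part of the $\mathcal H^1_{q'}$ norm) or an approximation argument; I would handle this by first establishing the estimate for smooth $\rho$ and smooth $b$ — where all norms are finite and the $W^{2,1}_{q'}$ interpolation is legitimate — and only afterwards invoking density. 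Second, the endpoint nature of the exponent range: the estimate genuinely fails at $q'=\frac{N+2}{N+1}$, so one must keep the strict inequality throughout and check that the constants from the heat-kernel $L^1\to L^{q'}$ mapping and from the interpolation step do not degenerate as long as $q'$ stays strictly below the critical value. Everything else (Hölder in time to produce the $\tau^{1/q'}$ and $\tau$ factors, the fixed-point setup) is routine and mirrors Lemma \ref{czmix}.
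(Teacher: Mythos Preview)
Your absorption step has a genuine gap when $s\in[1/2,1)$, and the fix you propose does not close it. The local $L^1$-data estimate only gives control of $\|\rho\|_{\mathcal H^1_{q'}}$, while the interpolation inequality you invoke produces $\delta\|\rho\|_{W^{2,1}_{q'}}$, which is a \emph{strictly stronger} norm; you cannot absorb it into the left-hand side regardless of how small $\delta$ is. Passing to smooth $\rho$ does not help: smoothness makes both sides finite, but the inequality still does not close, because there is no a priori control of $\|\rho\|_{W^{2,1}_{q'}}$ in terms of $\|\rho_\tau\|_{L^1}$ (any such control would require terminal data in the trace space $W^{2-2/q',q'}$, and the resulting constant blows up as you undo the regularization). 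Your alternative interpolation $\|(-\Delta)^s z\|_{L^{q'}}\le\delta\|z\|_{\mathcal H^1_{q'}}+C(\delta)\|z\|_{L^{q'}}$ is correct only for $s<1/2$, so at best your argument covers half the stated range of $s$.

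The paper avoids this by reversing the roles: instead of moving $(-\Delta)^s\rho$ to the right-hand side of the local equation, it keeps the full mixed operator $-\Delta+(-\Delta)^s$ on the left and moves only $\mathrm{div}(b\rho)$ to the right. The perturbative absorption of $(-\Delta)^s$ then takes place at the level of the $W^{2,1}_{q}$ maximal regularity for the dual (forward) linear problem --- this is exactly Lemma~\ref{czmix}, where the interpolation works for every $s\in(0,1)$ because $2s<2$. Once maximal regularity for the mixed heat operator is in hand, the $\mathcal H^1_{q'}$ estimate with $L^1$ terminal data follows either by duality (as in \cite[Proposition~2.4]{cg20}) or by the divergence-form $L^1$-data theory for the mixed operator (as in \cite{Goffi21JEE}), together with trace embeddings. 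The point is that the nonlocal term must be handled where it is genuinely lower order, namely relative to $\Delta$ in the $W^{2,1}_{q}$ scale, not relative to $\nabla$ in the $\mathcal H^1_{q'}$ scale.
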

\begin{proof}
The proof can be done following either the duality arguments in \cite[Proposition 2.4]{cg20} or regarding the problem \eqref{fpm} as
\begin{equation*}
\begin{cases}
\partial_t\rho-\Delta \rho+(-\Delta)^s \rho=-\mathrm{div}(b(x,t)\rho)&\text{ in }Q_\tau:=\T^N\times(0,\tau),\\
\rho(x,\tau)=\rho_\tau(x)&\text{ in }\T^N\ ,
\end{cases}
\end{equation*}
and then using maximal regularity properties for the linear evolution equation $\partial_t\rho-\Delta \rho+(-\Delta)^s \rho=V(x,t)\in L^q_{x,t}$ as in \cite{Goffi21JEE}, together with the embeddings of (trace) fractional Sobolev spaces. Maximal regularity for the heat equation with mixed diffusion in Lebesgue spaces holds by Lemma \ref{czmix}.
\end{proof}
As a consequence, we have the following
\begin{cor}\label{corfpmain}
Let $\rho$ be the nonnegative solution to \eqref{fpm} and let
\[
1<q'<\frac{N+2}{N+1}\ .
\]
Then there exists $C>0$ depending on $\sigma',N,s,T$ such that
\[
\|\rho\|_{\mathcal{H}_{q'}^1(Q_\tau)}\leq C\left(\iint_{Q_\tau}|b|^{m'}\rho\,dxdt+1\right)\ .
\]
where
\[
m'=1+\frac{N+2}{q}.
\]
\end{cor}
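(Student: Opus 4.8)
The plan is to combine Proposition~\ref{prop1} with mass conservation and a parabolic Gagliardo--Nirenberg inequality, and then run a self-improving absorption argument. We may assume $A:=\iint_{Q_\tau}|b|^{m'}\rho\,dxdt<\infty$ (otherwise there is nothing to prove), and since $\rho$ is a classical solution on the bounded cylinder $Q_\tau$ the quantity $X:=\|\rho\|_{\mathcal{H}^1_{q'}(Q_\tau)}$ is finite, which will legitimize the absorptions. First I would integrate \eqref{fpm} over $\T^N$: by periodicity the diffusive and divergence terms drop, so $t\mapsto\int_{\T^N}\rho(\cdot,t)\,dx$ is constant, equal to $\|\rho_\tau\|_{L^1(\T^N)}=1$; hence $\rho\in L^\infty(0,\tau;L^1(\T^N))$ with unit norm and $\|\rho\|_{L^1(Q_\tau)}\le T$.

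Next I would feed the bound $\rho\in L^\infty_tL^1_x\cap L^{q'}_tW^{1,q'}_x$ into the Gagliardo--Nirenberg inequality on $\T^N$, pointwise in $t$, then raise to a suitable power and integrate, obtaining $\rho\in L^{\sigma^*}(Q_\tau)$ with $\sigma^*:=\frac{(N+1)q'}{N}$ and $\|\rho\|_{L^{\sigma^*}(Q_\tau)}\le C(1+X)$ (the $L^1$ mass being $1$, it does not contribute to the estimate). The crucial point is the algebraic identity, valid since $m'=1+\frac{N+2}{q}$ and $q'=\frac{q}{q-1}$,
\[
\frac{q'(m'-1)}{m'-q'}=\frac{(N+2)q'}{\,N+2-q'\,}=:\xi ,
\]
together with the observation that the standing hypothesis $q'<\frac{N+2}{N+1}$ is exactly equivalent both to $\xi\le\sigma^*$ and to $q'<m'$. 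Since $|Q_\tau|<\infty$ this yields $\|\rho\|_{L^\xi(Q_\tau)}\le C(1+X)$, and, interpolating $L^{q'}$ between $L^1$ and $L^{\sigma^*}$, also $\|\rho\|_{L^{q'}(Q_\tau)}\le C(1+X)^{\alpha}$ for some $\alpha\in(0,1)$.

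Then, since $q'<m'$, I would split $|b|^{q'}\rho^{q'}=(|b|^{m'}\rho)^{q'/m'}\,\rho^{\,q'(m'-1)/m'}$ and apply H\"older's inequality with exponents $\tfrac{m'}{q'}$ and $\tfrac{m'}{m'-q'}$; because $\frac{q'(m'-1)/m'}{(m'-q')/m'}=\xi$, the $\rho$-factor that appears is exactly $\bigl(\iint_{Q_\tau}\rho^\xi\bigr)^{(m'-q')/m'}\le C(1+X)^{q'(m'-1)/m'}$, so that $\|b\rho\|_{L^{q'}(Q_\tau)}\le C\,A^{1/m'}(1+X)^{(m'-1)/m'}$. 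Inserting this and the bound on $\|\rho\|_{L^{q'}}$ into Proposition~\ref{prop1} gives $X\le C A^{1/m'}(1+X)^{1-1/m'}+C(1+X)^{\alpha}+C$; Young's inequality (with exponents $m',\tfrac{m'}{m'-1}$ on the first term and exponent $\tfrac1\alpha$ on the second) turns the right-hand side into $\tfrac12(1+X)+C(A+1)$, and absorbing $\tfrac12(1+X)$ into the left-hand side yields $X\le C(A+1)$, which is the claim.

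The argument has no deep step; it is a bookkeeping-heavy duality estimate, and the only genuinely delicate point is making every exponent line up so that the absorption closes. Concretely, one must recognize that the integrability $\xi$ that the nonlinear term forces on $\rho$ is precisely the one delivered by the parabolic Gagliardo--Nirenberg embedding with $L^1$-in-time control, i.e.\ the identity $\frac{q'(m'-1)}{m'-q'}=\frac{(N+2)q'}{N+2-q'}$ — which is exactly why $m'=1+\frac{N+2}{q}$ is the correct exponent — and that the hypothesis $q'<\frac{N+2}{N+1}$ simultaneously secures $\xi\le\sigma^*$ and $q'<m'$, the two facts underpinning the two previous steps. A minor technical caveat is that everything must be carried out for classical (or sufficiently regular weak) $\rho$ so that $X<\infty$ a priori.
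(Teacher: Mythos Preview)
Your argument is correct and follows the same strategy as the paper (which defers to \cite[Proposition~2.5]{cg20}): apply H\"older's inequality to split $\|b\rho\|_{L^{q'}}$, use an embedding to control $\|\rho\|_{L^\xi}$ by the $\mathcal{H}^1_{q'}$-norm, and close with Young's inequality on top of Proposition~\ref{prop1}. The only minor variation is that you obtain the $L^\xi$ bound via Gagliardo--Nirenberg with the $L^\infty_tL^1_x$ mass-conservation endpoint, whereas the paper invokes the parabolic Sobolev embedding $\mathcal{H}^1_{q'}\hookrightarrow L^\xi$ directly; one small inaccuracy is that $q'<\frac{N+2}{N+1}$ is only \emph{sufficient} (not equivalent) for $q'<m'$, since the latter amounts to $q>\frac{N+2}{N+1}$, but this does not affect the proof.
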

\begin{proof}
The proof is a consequence of the application of the Young's inequality in the estimate of Proposition \ref{prop1}, and it is essentially the same of \cite[Proposition 2.5]{cg20}, so we omit it. 
\end{proof}
\section{A priori estimates for the Hamilton-Jacobi equation by duality and some consequences}\label{sec;esthj}
The main goal of this section is to analyze the following transport equation
\begin{equation}\label{fpms}
\begin{cases}
\partial_t\rho-\Delta \rho+(-\Delta)^s \rho-\mathrm{div}(H_p(x,\nabla u(x,t))\rho)=0&\text{ in }Q_\tau:=\T^N\times(0,\tau),\\
\rho(x,\tau)=\rho_\tau(x)&\text{ in }\T^N\ ,
\end{cases}
\end{equation}
for $\tau\in(0,T)$, $\rho_\tau\in C^\infty(\T^N)$, $\rho_\tau\geq0$ and $\|\rho_\tau\|_{L^1(\T^N)}=1$. Note that by the standing assumptions, $\rho$ is a.e. nonnegative on the cylinder. Notice also that it is the adjoint equation of the linearization of \eqref{hjm}. From now on, we denote by $L:\T^N\times\R^N\to \R$ the Legendre transform of $H$ with respect to the second entry, i.e. 
\[
L(x,\nu)=\sup_{p\in\R^N}\{p\cdot \nu-H(x,p)\}.
\]
By the convexity of $H(x,\cdot)$ it follows that
\[
H(x,p)=\sup_{\nu\in\R^N}\{p\cdot \nu-L(x,\nu)\}
\]
and
\[
H(x,p)=p\cdot \nu-L(x,\nu)\iff \nu=H_p(x,p).
\]
We further recall the following properties of the Lagrangian function $L$ valid for all $\nu\in\R^N$:
\begin{equation}\label{L1}\tag{L1}
C_L^{-1}|\nu|^{\gamma'}-C_L\leq |L(x,\nu)|\leq C_L|\nu|^{\gamma'}.
\end{equation}
\begin{thm}\label{mainberndual}
Let $u,\rho$ be classical solutions to \eqref{hjm} and \eqref{fpms} respectively, and assume
\begin{equation}\label{boundq}
q>\max\left\{\frac{(N+2)(\gamma-1)}{2},N+2\right\}.
\end{equation}
Then, there exists $C>0$ and $\theta\in(0,1)$ such that
\[
\|\rho\|_{\mathcal{H}_{q'}^1(Q_\tau)}\leq C\left(\|\nabla u\|_{L^\infty(Q_\tau)}^{1-\theta}+1\right),
\]
where $C$ depends on $C_H,s,q,d,T,\|u_0\|_{C(\T^N)},\|f\|_{L^q(Q_T)}$.
\end{thm}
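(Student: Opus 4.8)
The plan is to combine the maximal--regularity bound for the adjoint equation provided by Corollary \ref{corfpmain} with the fundamental identity of the adjoint method, obtained by testing \eqref{hjm} against the solution $\rho$ of \eqref{fpms}, and then to close the resulting chain of inequalities by a Young-type absorption; the thresholds in \eqref{boundq} are precisely what guarantees that, at the end, $\|\nabla u\|_{L^{\infty}(Q_\tau)}$ enters with an exponent strictly less than $1$.

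First I would derive the energy identity. Multiplying \eqref{hjm} by $\rho$, integrating over $Q_\tau$ and integrating by parts --- using that \eqref{fpms} is the adjoint of the linearization of \eqref{hjm} and that $\mathcal{L}^{*}=\mathcal{L}$ on the torus --- one obtains
\begin{multline*}
\iint_{Q_\tau}\bigl(H_p(x,\nabla u)\cdot\nabla u-H(x,\nabla u)\bigr)\rho\,dx\,dt\\
=\int_{\T^N}u(\cdot,\tau)\rho_\tau\,dx-\int_{\T^N}u_0\,\rho(\cdot,0)\,dx-\iint_{Q_\tau}f\rho\,dx\,dt.
\end{multline*}
Since $\rho\ge0$ and $\int_{\T^N}\rho(\cdot,t)\,dx\equiv1$ (integrate \eqref{fpms} in $x$, the operator being conservative on $\T^N$), the two boundary terms are bounded by $\sup_{Q_T}u^{+}+\|u_0\|_{C(\T^N)}$. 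The one-sided bound $\sup_{Q_T}u\le C$, with $C$ depending only on $\|u_0\|_{C(\T^N)},\|f\|_{L^{q}(Q_T)}$ and the data, is elementary: since $H\ge -C_H$, $u$ is a subsolution of $\partial_t v+\mathcal{L}v=f+C_H$, and one concludes by the comparison principle for $\mathcal{L}$, Lemma \ref{czmix}, and the embedding $W^{2,1}_q(Q_T)\hookrightarrow C(\overline{Q_T})$ (valid as $q>N+2$). Using the coercivity in \eqref{H}, $H_p\cdot p-H\ge C_H^{-1}|p|^{\gamma}-C_H$, and H\"older's inequality on the last term, this yields
\[
\iint_{Q_\tau}|\nabla u|^{\gamma}\rho\,dx\,dt\le C\bigl(1+\|f\|_{L^{q}(Q_\tau)}\,\|\rho\|_{L^{q'}(Q_\tau)}\bigr).
\]

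Next, set $R:=\|\rho\|_{\mathcal{H}^{1}_{q'}(Q_\tau)}$. Since $q>N+2$, i.e. $q'<\frac{N+2}{N+1}$, Corollary \ref{corfpmain} applies; combining the parabolic Sobolev embedding $\mathcal{H}^{1}_{q'}(Q_\tau)\hookrightarrow L^{p_0}(Q_\tau)$ with $p_0=q'\frac{N+2}{N}>q'$ and interpolation against the free bound $\|\rho\|_{L^{1}(Q_\tau)}\le T$ (again by mass conservation), one gets $\|\rho\|_{L^{q'}(Q_\tau)}\le C R^{\kappa}$ for some $\kappa\in(0,1)$, whence $\iint_{Q_\tau}|\nabla u|^{\gamma}\rho\le C(1+R^{\kappa})$. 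On the other hand, Corollary \ref{corfpmain} applied with $b=H_p(x,\nabla u)$, together with $|H_p(x,p)|\le C_H(|p|^{\gamma-1}+1)$ from \eqref{H}, gives $R\le C\bigl(1+\iint_{Q_\tau}|\nabla u|^{(\gamma-1)m'}\rho\,dx\,dt\bigr)$, $m'=1+\frac{N+2}{q}$. Writing $\delta:=(\gamma-1)m'-\gamma=\frac{(N+2)(\gamma-1)}{q}-1$, by \eqref{boundq} one has $\delta<1$; splitting $|\nabla u|^{(\gamma-1)m'}\le(\|\nabla u\|_{L^{\infty}(Q_\tau)}+1)^{\delta_{+}}(|\nabla u|^{\gamma}+1)$ and using $\iint_{Q_\tau}\rho\le T$ then gives $R\le C(\|\nabla u\|_{L^{\infty}(Q_\tau)}+1)^{1-\theta}\bigl(1+\iint_{Q_\tau}|\nabla u|^{\gamma}\rho\bigr)$ with $1-\theta:=\delta_{+}\in[0,1)$.

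Finally, inserting $\iint_{Q_\tau}|\nabla u|^{\gamma}\rho\le C(1+R^{\kappa})$ into the last display, I reach $R\le C\,(\|\nabla u\|_{L^{\infty}(Q_\tau)}+1)^{1-\theta}(1+R^{\kappa})$ with $\kappa\in(0,1)$, which Young's inequality turns --- after relabelling $\theta$ --- into $\|\rho\|_{\mathcal{H}^{1}_{q'}(Q_\tau)}\le C(\|\nabla u\|_{L^{\infty}(Q_\tau)}^{1-\theta}+1)$. The main obstacle is precisely this last balancing: each of the three elementary steps --- testing, the embedding/interpolation bound for $\iint_{Q_\tau}f\rho$, and the $L^{\infty}$/energy splitting of $\iint_{Q_\tau}|\nabla u|^{(\gamma-1)m'}\rho$ --- loses a bit of integrability, and it is only under the sharp thresholds \eqref{boundq} that these losses reabsorb while leaving the power of the gradient below $1$; in the borderline range one must sharpen the crude interpolation used above to the finer one of \cite{cg20}, from which this whole scheme is borrowed, whereas in the easy range $(\gamma-1)m'\le\gamma$ one gets directly $R\le C$ and the statement is immediate.
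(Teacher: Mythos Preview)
Your strategy is close to the paper's, but the final absorption step has a genuine gap. From $R\le C(\|\nabla u\|_{L^\infty}+1)^{1-\theta}(1+R^{\kappa})$ Young's inequality gives $R\le C(\|\nabla u\|_{L^\infty}+1)^{(1-\theta)/(1-\kappa)}$, so you need $\delta_{+}+\kappa<1$ for the exponent to stay below $1$. With your choices one computes $\delta=\frac{(N+2)(\gamma-1)}{q}-1$ and, using the correct parabolic embedding $\mathcal{H}^1_{q'}\hookrightarrow L^{p_0}$ with $\frac{1}{p_0}=\frac{1}{q'}-\frac{1}{N+2}$, the interpolation exponent is $\kappa=\frac{N+2}{N+2+q}$. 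As $q\downarrow \frac{(N+2)(\gamma-1)}{2}$ (the threshold in \eqref{boundq} for $\gamma>3$) one has $\delta\to 1$ and $\kappa\to\frac{2}{\gamma+1}>0$, hence $\delta+\kappa>1$; the same failure occurs for $2<\gamma\le 3$ near $q=N+2$. So the scheme does not close on the full range \eqref{boundq}, and your remark that ``in the borderline range one must sharpen the crude interpolation'' is an acknowledgement of the gap rather than a fix.

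The paper avoids the circularity altogether by first establishing, in Proposition~\ref{cross}, the bound $\iint_{Q_\tau}|\nabla u|^{\gamma}\rho\le C$ with $C$ \emph{independent of} $\|\rho\|_{\mathcal{H}^1_{q'}}$. The point is to apply H\"older to $\iint f\rho$ with an \emph{intermediate} exponent $r$ chosen in the window $\frac{(N+2)(\gamma-1)}{\gamma}<r<N+2$ (hence $r<q$), and to control $\|\rho\|_{L^{r'}}$ not by interpolation against $R$ but by a \emph{second} use of Corollary~\ref{corfpmain} at a different exponent $\bar q'$ matched to $r$ via the embedding $\mathcal{H}^1_{\bar q'}\hookrightarrow L^{r'}$. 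The resulting exponent is $m'=\frac{N+2}{r}<\gamma'$, so $\iint|H_p|^{m'}\rho$ is absorbed into $\iint|H_p|^{\gamma'}\rho$ by Young's inequality with no residual power of any $\rho$-norm. Only after this decoupling does the paper return to the target exponent $q'$, apply Corollary~\ref{corfpmain} once, split $|\nabla u|^{(\gamma-1)m'}\le\|\nabla u\|_{L^\infty}^{1-\theta}|\nabla u|^{(\gamma-1)m'-1+\theta}$ with $\theta$ small enough that $(\gamma-1)m'-1+\theta\le\gamma$ (possible exactly when $q>\frac{(N+2)(\gamma-1)}{2}$), and invoke Proposition~\ref{cross} to bound the remaining integral by a constant. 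No self-referential term in $R$ ever appears.
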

Parabolic Sobolev embeddings of $\mathcal{H}_q^1(Q_T)$ into $L^{s}(Q_T)$ for $s>1$ satisfying $\frac{1}{s}=\frac{1}{q'}-\frac{1}{N+2}$ (cf. \cite[Appendix A]{cg20}) imply the following result:
\begin{cor}\label{corp}
Under the assumptions of Theorem \ref{mainberndual}, there exists a constant $C>0$ independent of $u$ such that
\[
\|\rho\|_{L^{p'}(Q_\tau)}\leq C(\left(\|\nabla u\|_{L^\infty(Q_\tau)}^{1-\theta}+1\right),\ p>\frac{(N+2)(\gamma-1)}{\gamma+1}.
\]
\end{cor}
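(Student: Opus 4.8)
The plan is to read this off from Theorem~\ref{mainberndual} by a single application of the parabolic Sobolev embedding, the only real work being the bookkeeping of the exponents. First I would fix $q$ as in \eqref{boundq} and record that, since $q>N+2$, the conjugate exponent satisfies $q'=\frac{q}{q-1}<\frac{N+2}{N+1}$, whence
\[
\frac{1}{q'}-\frac{1}{N+2}>\frac{N+1}{N+2}-\frac{1}{N+2}=\frac{N}{N+2}>0.
\]
This positivity is what makes the embedding $\mathcal{H}^1_{q'}(Q_\tau)\hookrightarrow L^{p'}(Q_\tau)$, with $\frac{1}{p'}=\frac{1}{q'}-\frac{1}{N+2}$ (see \cite[Appendix~A]{cg20}), land in a genuine Lebesgue space — one checks $p'\in(\frac{N+2}{N+1},\frac{N+2}{N})$ — and I regard this as the one point that deserves a line of justification rather than being automatic.

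Composing this embedding with the estimate of Theorem~\ref{mainberndual} then gives at once
\[
\|\rho\|_{L^{p'}(Q_\tau)}\leq C\,\|\rho\|_{\mathcal{H}^1_{q'}(Q_\tau)}\leq C\left(\|\nabla u\|_{L^\infty(Q_\tau)}^{1-\theta}+1\right),
\]
with the same $\theta\in(0,1)$ as in Theorem~\ref{mainberndual} and with $C$ depending only on the data listed there together with $N,s,T,q$. Next I would convert the exponent relation into a statement about $p$: taking complements, $\frac{1}{p}=1-\frac{1}{p'}=\frac{1}{q}+\frac{1}{N+2}$, and feeding in the lower bound $q>\frac{(N+2)(\gamma-1)}{2}$ from \eqref{boundq}, i.e. $\frac{1}{q}<\frac{2}{(N+2)(\gamma-1)}$, yields
\[
\frac{1}{p}<\frac{2}{(N+2)(\gamma-1)}+\frac{1}{N+2}=\frac{\gamma+1}{(N+2)(\gamma-1)},
\]
that is, $p>\frac{(N+2)(\gamma-1)}{\gamma+1}$.

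To reach the full claimed range I would exploit that $q$ is a free parameter in \eqref{boundq}: letting $q$ decrease toward $\frac{(N+2)(\gamma-1)}{2}$ makes $p$ decrease toward $\frac{(N+2)(\gamma-1)}{\gamma+1}$, and for any intermediate exponent the bound persists, since $Q_\tau\subseteq Q_T$ has finite measure and therefore $\|\rho\|_{L^{r}(Q_\tau)}\leq |Q_T|^{\frac1r-\frac1{p'}}\|\rho\|_{L^{p'}(Q_\tau)}$ whenever $r\leq p'$, with $|Q_T|$ bounded in terms of $N$ and $T$.

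The main — indeed only — obstacle is purely arithmetical: keeping straight which exponent is conjugate to which and confirming that the Sobolev gain $\frac{1}{N+2}$ does not push $\frac{1}{q'}$ below zero, so that the embedding is non-degenerate. No new PDE input is required beyond Theorem~\ref{mainberndual} and the embedding of $\mathcal{H}^1$-type parabolic spaces already quoted in the paragraph preceding the statement.
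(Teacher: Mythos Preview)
Your proposal is correct and follows exactly the paper's own argument: the paper's proof is simply the sentence preceding the statement, namely that the parabolic Sobolev embedding $\mathcal{H}^1_{q'}(Q_\tau)\hookrightarrow L^{p'}(Q_\tau)$ with $\frac{1}{p'}=\frac{1}{q'}-\frac{1}{N+2}$ applied to Theorem~\ref{mainberndual} yields the bound. Your exponent bookkeeping is accurate, and your final paragraph about reaching the full range of $p$ is more care than the paper itself takes (the paper only records the threshold on $p$ without elaboration, and in the later application it is only used for $p\geq p_0$, where Hölder on the bounded cylinder suffices).
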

The proof of Theorem \ref{mainberndual} follows the main line of \cite[Section 3.1]{cg20}. We first recall the following crucial representation formula, that easily follows by multiplying \eqref{hjm} by $-\rho$ and \eqref{fpms} by $u$:
\begin{prop}\label{repr}
Let $u$ be a solution to \eqref{hjm} and $\rho$ be a solution to \eqref{fpms}. Then the following identity holds
\[
\int_{\T^N}u(x,\tau)\rho_\tau(x)\,dx=\int_{\T^N}u_0(x)\rho(x,0)\,dx+\iint_{Q_\tau}L(x,H_p(x,\nabla u(x,t))\rho\,dxdt+\iint_{Q_\tau}f\rho\,dxdt.
\]
\end{prop}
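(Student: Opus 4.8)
The plan is to carry out exactly the duality pairing indicated just before the statement. Fix $\tau\in(0,T)$ and restrict \eqref{hjm} to the cylinder $Q_\tau$. I would multiply \eqref{hjm} by $-\rho$ and the adjoint equation \eqref{fpms} by $u$, add the two resulting equations, and integrate over $Q_\tau$. The virtue of these sign choices is that the two time-derivative contributions assemble into $-\partial_t(u\rho)$; integrating in $t$ (legitimate since $u$ and $\rho$ are classical, so $t\mapsto\int_{\T^N}u\rho\,dx$ is absolutely continuous) and invoking the initial datum $u(\cdot,0)=u_0$ together with the terminal datum $\rho(\cdot,\tau)=\rho_\tau$ produces precisely $\int_{\T^N}u_0\,\rho(\cdot,0)\,dx-\int_{\T^N}u(\cdot,\tau)\rho_\tau\,dx$.

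Next I would dispose of the diffusive and transport terms for a.e.\ fixed $t$. On $\T^N$ both $-\Delta$ and $(-\Delta)^s$ are self-adjoint: integration by parts for the Laplacian generates no boundary term, and for the fractional part the symmetry $K(y)=K(-y)$ of the kernel makes $\int_{\T^N}\big((-\Delta)^sv\big)w\,dx$ symmetric in $(v,w)$. Hence $\int_{\T^N}(\mathcal{L}u)\rho\,dx=\int_{\T^N}u\,\mathcal{L}\rho\,dx$ and the combined second-order contribution cancels identically. For the drift term, one integration by parts gives $\int_{\T^N}\mathrm{div}\big(H_p(x,\nabla u)\rho\big)\,u\,dx=-\int_{\T^N}\big(H_p(x,\nabla u)\cdot\nabla u\big)\rho\,dx$.

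Collecting the surviving terms, the identity becomes $\int_{\T^N}u(\cdot,\tau)\rho_\tau\,dx=\int_{\T^N}u_0\,\rho(\cdot,0)\,dx+\iint_{Q_\tau}\big(H_p(x,\nabla u)\cdot\nabla u-H(x,\nabla u)\big)\rho\,dx\,dt+\iint_{Q_\tau}f\rho\,dx\,dt$. The last step is to rewrite the middle integrand via the Legendre duality recalled above: the equality case $H(x,p)=p\cdot\nu-L(x,\nu)\iff\nu=H_p(x,p)$ says exactly that $H_p(x,p)\cdot p-H(x,p)=L\big(x,H_p(x,p)\big)$, which turns the middle term into $\iint_{Q_\tau}L\big(x,H_p(x,\nabla u)\big)\rho\,dx\,dt$ and yields the claimed formula.

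I do not expect any genuine obstacle: the computation is purely formal for classical solutions, the only points requiring (routine) care being the sign bookkeeping in the time-boundary term — which is precisely what dictates pairing against $-\rho$ and $u$ rather than $\rho$ and $u$ — and the self-adjointness of the nonlocal operator on the torus. If desired, the same identity extends to $W^{2,1}_q(Q_T)$ strong solutions of \eqref{hjm} paired with $\mathcal{H}_{q'}^1(Q_\tau)$ solutions of \eqref{fpms} by an approximation argument, since in that regime every term above is a well-defined duality pairing; this is what makes Proposition \ref{repr} directly usable in the proof of Theorem \ref{mainberndual}.
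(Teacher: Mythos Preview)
Your proposal is correct and follows exactly the approach the paper itself indicates: the paper provides no detailed proof beyond the sentence ``easily follows by multiplying \eqref{hjm} by $-\rho$ and \eqref{fpms} by $u$'', and you have simply (and accurately) filled in the routine steps --- self-adjointness of $\mathcal{L}$ on $\T^N$, the integration by parts on the drift term, and the Legendre identity $H_p(x,p)\cdot p-H(x,p)=L(x,H_p(x,p))$.
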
 
With the aid of Proposition \ref{repr} we first prove the following sup-norm estimate for solutions to \eqref{hjm}. This slightly extends \cite[Proposition 3.7]{cg20} and \cite[Theorem 2.3]{Goffi21JEE} to problems with mixed diffusion.
\begin{prop}\label{sup}
Let $f\in L^q(Q_T)$, $q>\frac{N+2}{2}$. Any solution to \eqref{hjm} satisfies
\[
\|u(\cdot,t)\|_{C(\T^N)}\leq C\ ,t\in[0,T],
\]
where $C$ depends on $T,N,q,s,\|f\|_{L^q(Q_T)}$.
\end{prop}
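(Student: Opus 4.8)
The plan is to establish the two one-sided bounds separately. For the upper bound --- which does not involve the adjoint equation --- I use that the left inequality in \eqref{H} gives $H(x,\nabla u)\ge -C_H$, hence $\partial_t u+\mathcal{L}u\le f+C_H$ on $Q_T$. Let $w$ solve the linear Cauchy problem $\partial_t w+\mathcal{L}w=|f|+C_H$ in $Q_T$ with $w(\cdot,0)=0$: by Lemma~\ref{czmix}, $w\in W^{2,1}_q(Q_T)$ with $\|w\|_{W^{2,1}_q(Q_T)}\le C(\|f\|_{L^q(Q_T)}+1)$, and since $q>\frac{N+2}{2}$ the parabolic Sobolev embedding $W^{2,1}_q(Q_T)\hookrightarrow L^\infty(Q_T)$ yields $\|w\|_{L^\infty(Q_T)}\le C$. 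Then $v:=u-\|u_0\|_{L^\infty(\T^N)}-w$ satisfies $\partial_t v+\mathcal{L}v\le 0$ in $Q_T$ (as $\mathcal{L}$ annihilates constants and $H\ge -C_H$) and $v(\cdot,0)\le 0$, so the comparison principle for $\partial_t+\mathcal{L}$ --- valid because both $-\Delta$ and $(-\Delta)^s$ obey the positive maximum principle --- forces $v\le 0$, i.e. $u\le\|u_0\|_{L^\infty(\T^N)}+\|w\|_{L^\infty(Q_T)}\le C$.

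For the lower bound I fix $(x_0,\tau)\in Q_T$ and let $\rho$ solve \eqref{fpms} on $Q_\tau$ with terminal datum $\rho_\tau$ a standard mollification of the Dirac mass at $x_0$; then $\rho\ge 0$ by the standing assumptions and, integrating \eqref{fpms} over $\T^N$ (periodicity annihilates $\Delta\rho$, $(-\Delta)^s\rho$ and $\mathrm{div}(H_p\rho)$), one has mass conservation $\|\rho(\cdot,t)\|_{L^1(\T^N)}=1$ for all $t\in[0,\tau]$. Inserting $\rho$ into Proposition~\ref{repr} and bounding its right-hand side from below via $\int u_0\rho(\cdot,0)\ge -\|u_0\|_{L^\infty(\T^N)}$, via the Legendre identity $L(x,H_p(x,p))=H_p(x,p)\cdot p-H(x,p)\ge C_H^{-1}|p|^\gamma-C_H\ge -C_H$ from \eqref{H} (whence $\iint_{Q_\tau}L(x,H_p(x,\nabla u))\rho\ge -C_H\tau$), and via $\iint_{Q_\tau}f\rho\ge -\|f\|_{L^q(Q_T)}\|\rho\|_{L^{q'}(Q_\tau)}$ by H\"older, then letting the mollification parameter tend to $0$, gives $u(x_0,\tau)\ge -\|u_0\|_{L^\infty(\T^N)}-C_H\tau-\|f\|_{L^q(Q_T)}\|\rho\|_{L^{q'}(Q_\tau)}$. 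It then remains to bound $\|\rho\|_{L^{q'}(Q_\tau)}$ by a constant independent of $u$, for which I use the equation and its adjoint together: first, rewriting Proposition~\ref{repr} as $\iint_{Q_\tau}L(x,H_p(x,\nabla u))\rho=\int u(\cdot,\tau)\rho_\tau-\int u_0\rho(\cdot,0)-\iint_{Q_\tau}f\rho$ and using the upper bound $u(\cdot,\tau)\le C$ together with the coercivity of $L$ yields $\iint_{Q_\tau}|\nabla u|^\gamma\rho\le C(1+\|f\|_{L^q(Q_T)}\|\rho\|_{L^{q'}(Q_\tau)})$; second, the transport estimates of Section~\ref{sec;transport} (Proposition~\ref{prop1}, Corollary~\ref{corfpmain}), followed by the parabolic Sobolev embedding of $\mathcal{H}^1_{q'}(Q_\tau)$ and interpolation against $\|\rho\|_{L^1}=1$, control $\|\rho\|_{L^{q'}(Q_\tau)}$ in terms of $\iint_{Q_\tau}|H_p(x,\nabla u)|^{m'}\rho\lesssim\iint_{Q_\tau}|\nabla u|^{(\gamma-1)m'}\rho+1$ with $m'=1+\frac{N+2}{q}$. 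Under the hypothesis on $q$ the power $(\gamma-1)m'$ is subcritical with respect to $|\nabla u|^\gamma$, so H\"older's inequality combines the two displays into $\|\rho\|_{L^{q'}(Q_\tau)}\le C(1+\|\rho\|_{L^{q'}(Q_\tau)})^{\kappa}$ for some $\kappa\in(0,1)$, whence $\|\rho\|_{L^{q'}(Q_\tau)}\le C$ and the lower bound follows.

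The upper bound is routine; the crux is the last step of the lower bound. There is no bound on $\|\rho\|_{L^{q'}}$ uniform over the drift $H_p(x,\nabla u)$ alone, so one genuinely has to feed the coercivity of $L$ and the already-proven $L^\infty$-bound on $u$ --- i.e. the Hamilton-Jacobi equation --- back into the analysis of its linearized adjoint; the closing of the bound therefore rests delicately on the quantitative transport estimates of Section~\ref{sec;transport} and on the precise bookkeeping of Lebesgue exponents dictated by the hypothesis on $q$.
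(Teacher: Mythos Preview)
Your overall strategy matches the paper's: an elementary one-sided bound for the upper estimate, and the duality/representation formula of Proposition~\ref{repr} with the adjoint $\rho$ for the lower one. Two comments are in order.

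For the \emph{upper bound} you take a genuinely different route. The paper tests \eqref{hjm} against the solution $\mu$ of the \emph{driftless} backward problem $-\partial_t\mu+\mathcal{L}\mu=0$, $\mu(\tau)=\mu_\tau$, and uses Corollary~\ref{corfpmain} with $b\equiv0$ plus Sobolev embedding to get $\|\mu\|_{L^{q'}}\le C$; H\"older and $H\ge -C_H$ then give~\eqref{upper} by duality. Your comparison argument with the auxiliary $w\in W^{2,1}_q$ is equally valid and arguably more elementary, trading the duality machinery for the maximum principle and Lemma~\ref{czmix}. Both give the same dependence of the constant.

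For the \emph{lower bound} your closing step has a gap in the exponent bookkeeping. You invoke Corollary~\ref{corfpmain} at the exponent $q'$ and set $m'=1+\frac{N+2}{q}$, but that corollary is stated only for $q'<\frac{N+2}{N+1}$, i.e.\ $q>N+2$, while the proposition merely assumes $q>\frac{N+2}{2}$. More seriously, your claim that $(\gamma-1)m'<\gamma$ ``under the hypothesis on $q$'' is false: with $m'=1+\frac{N+2}{q}$ this inequality is equivalent to $q>(\gamma-1)(N+2)$, which for $\gamma>\tfrac32$ is strictly stronger than $q>\frac{N+2}{2}$. The paper circumvents this (see the proof of Proposition~\ref{cross}) by inserting an intermediate exponent $r\le q$ with $\frac{(N+2)(\gamma-1)}{\gamma}<r$ and $r>\frac{N+2}{2}$, applying H\"older as $\iint f\rho\le\|f\|_{L^r}\|\rho\|_{L^{r'}}$, and choosing $\bar q'$ so that $\mathcal{H}^1_{\bar q'}\hookrightarrow L^{r'}$; this gives $m'=\frac{N+2}{r}<\gamma'$, whence the loop closes. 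Note that this still needs $q>\frac{(N+2)(\gamma-1)}{\gamma}$ when $\gamma\ge 2$, a restriction the paper makes explicit in Proposition~\ref{cross} and which is anyway implied by~\eqref{assq}.
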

\begin{proof}
We first prove that
\begin{equation}\label{upper}
u(x,\tau)\leq \|u_0\|_{C(\T^N)}+C\|f\|_{L^q(\T^N)}\ ,\tau\in(0,T),\ x\in\T^N.
\end{equation}
Consider the strong nonnegative solution of the following problem
\[
\begin{cases}
\partial_t\mu-\Delta \mu+(-\Delta)^s \mu=0&\text{ in }Q_\tau,\\
\mu(x,\tau)=\mu_\tau(x)&\text{ in }\T^N\ .
\end{cases}
\]
where $\mu_\tau\in C^\infty$, $\mu_\tau\geq0$ and $\|\mu_\tau\|_1=1$. By Corollary \ref{corfpmain} with $b\equiv0$ and Sobolev embeddings, we have $\|\mu\|_{L^{q'}}\leq C$, $q'<\frac{N+2}{N}$. Using $\mu$ as a test function in \eqref{hjm}, one obtains
\[
\int_{\T^N}u(x,\tau)\mu_\tau(x)\,dx=\int_{\T^N}u_0(x)\mu(x,0)\,dx-\iint_{Q_\tau}H(x,\nabla u(x,t))\mu\,dxdt+\iint_{Q_\tau}f\mu\,dxdt.
\]
Then, the upper bound on $u$ follows by duality using the H\"older's inequality on the first and third integral, the fact that $\|\mu(t)\|_1=1$ for all $t\in(0,\tau)$ and also that $H,\mu\geq0$. The bound from below can be obtained in the same manner testing \eqref{hjm} against the solution of \eqref{fpms}, as in \cite[Proposition 3.7]{cg20}, and it is based on the representation formula in Proposition \ref{repr}.
\end{proof}
This bound directly leads to the following integrability estimate on the velocity field $b$ of \eqref{fpms} with respect to $\rho$ computed along the solution of the equation \eqref{hjm}. Its proof follows that of Proposition 3.2 in \cite{cg20}.
\begin{prop}\label{cross}
Let $u$ be a solution to \eqref{hjm} and $\rho$ be a solution to \eqref{fpms}. Then, there exists a constant $C>0$ depending on $q,N,T,s,C_H,\|f\|_{L^r(Q_T)}$, $r>\max\left\{\frac{N+2}{2},\frac{(N+2)(\gamma-1)}{\gamma}\right\}$ such that
\[
\iint_{Q_\tau}|\nabla u(x,t)|^{k}\rho\,dxdt\leq C\ ,1\leq k\leq \gamma.
\]
\end{prop}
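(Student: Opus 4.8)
The plan is to close the estimate by combining three ingredients already established: the duality identity of Proposition \ref{repr}, the sup-norm bound of Proposition \ref{sup}, and the maximal-regularity estimate of Corollary \ref{corfpmain}, the loop being closed by Young's inequality. First I would observe that, by the Legendre duality recalled above (the equality case $H(x,p)=p\cdot\nu-L(x,\nu)\iff\nu=H_p(x,p)$), one has $L(x,H_p(x,\nabla u))=H_p(x,\nabla u)\cdot\nabla u-H(x,\nabla u)$, so the third line of \eqref{H} yields the pointwise lower bound $L(x,H_p(x,\nabla u))\ge C_H^{-1}|\nabla u|^{\gamma}-C_H$. Inserting this into the identity of Proposition \ref{repr}, and using that $\rho\ge 0$ with $\|\rho(\cdot,t)\|_{L^1(\T^N)}=1$ for every $t\in[0,\tau]$ (the mass is conserved since $-\Delta$, $(-\Delta)^s$ and $\mathrm{div}(H_p\rho)$ integrate to zero on $\T^N$), one obtains
\[
C_H^{-1}\iint_{Q_\tau}|\nabla u|^{\gamma}\rho\,dxdt\le \|u(\cdot,\tau)\|_{C(\T^N)}+\|u_0\|_{C(\T^N)}+\iint_{Q_\tau}|f|\rho\,dxdt+C_HT .
\]
Since $r>\frac{N+2}{2}$, Proposition \ref{sup} controls $\|u(\cdot,\tau)\|_{C(\T^N)}$ by a constant depending on the data, so everything reduces to estimating $\iint_{Q_\tau}|f|\rho\le \|f\|_{L^r(Q_\tau)}\|\rho\|_{L^{r'}(Q_\tau)}$, i.e.\ to bounding $\|\rho\|_{L^{r'}(Q_\tau)}$ by a sublinear power of $\iint_{Q_\tau}|\nabla u|^{\gamma}\rho$.

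For the latter I would fix an exponent $q'\in(1,\frac{N+2}{N+1})$ — whose existence, in terms of $r$ and $\gamma$, is the delicate point below — chosen so that the parabolic Sobolev embedding $\mathcal{H}^1_{q'}(Q_\tau)\hookrightarrow L^{r'}(Q_\tau)$ holds, that is $\frac1{r'}\ge\frac1{q'}-\frac1{N+2}$. Applying Corollary \ref{corfpmain} to $\rho$ with velocity field $b=H_p(x,\nabla u)$, $m'=1+\frac{N+2}{q}$ and $q=\frac{q'}{q'-1}$, together with $|H_p(x,p)|\le C_H|p|^{\gamma-1}+C_H$ from \eqref{H}, gives
\[
\|\rho\|_{L^{r'}(Q_\tau)}\le C\|\rho\|_{\mathcal{H}^1_{q'}(Q_\tau)}\le C\Big(\iint_{Q_\tau}|\nabla u|^{(\gamma-1)m'}\rho\,dxdt+1\Big).
\]
If $q'$ is moreover chosen so that $(\gamma-1)m'<\gamma$, equivalently $q>(\gamma-1)(N+2)$, then Jensen's inequality for the finite measure $\rho\,dxdt$ (of total mass $\le T$) gives $\iint_{Q_\tau}|\nabla u|^{(\gamma-1)m'}\rho\le C\big(\iint_{Q_\tau}|\nabla u|^{\gamma}\rho\big)^{\theta}$ with $\theta=\frac{(\gamma-1)m'}{\gamma}\in(0,1)$. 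Writing $X:=\iint_{Q_\tau}|\nabla u|^{\gamma}\rho$ and chaining the two displays, one arrives at $X\le C+C\|f\|_{L^r(Q_T)}X^{\theta}$ with $\theta<1$, whence Young's inequality yields $X\le C$, with $C$ depending only on $r,N,s,\gamma,T,C_H,\|u_0\|_{C(\T^N)},\|f\|_{L^r(Q_T)}$. The case of a general exponent $1\le k\le\gamma$ then follows at once from Hölder's inequality in the measure $\rho\,dxdt$: $\iint_{Q_\tau}|\nabla u|^{k}\rho\le X^{k/\gamma}\big(\iint_{Q_\tau}\rho\big)^{1-k/\gamma}\le\max\{1,X\}\,\max\{1,T\}$.

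The main obstacle is the simultaneous choice of $q'$: one needs $q'<\frac{N+2}{N+1}$ (so that Corollary \ref{corfpmain} applies to a solution with $L^1$ terminal datum), $\frac1{r'}\ge\frac1{q'}-\frac1{N+2}$ (the embedding into $L^{r'}$), and $\frac{q'}{q'-1}>(\gamma-1)(N+2)$ (the sublinear power). A short computation shows that for $1<\gamma\le 2$ the first constraint is binding, so any $q'$ slightly below $\frac{N+2}{N+1}$ works and forces only $r>\frac{N+2}{2}$, whereas for $\gamma\ge 2$ the last constraint forces $q'$ below $\frac{(\gamma-1)(N+2)}{(\gamma-1)(N+2)-1}$ and, after optimizing over the admissible range, $r>\frac{(N+2)(\gamma-1)}{\gamma}$. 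These are precisely the two quantities in the hypothesis on $r$, so an admissible $q'$ always exists; carrying out this exponent bookkeeping carefully (and keeping the strict inequality $\theta<1$, which is what makes the final absorption work) is the heart of the matter, everything else being the duality identity of Proposition \ref{repr} together with Hölder's, Jensen's and Young's inequalities, in the spirit of \cite[Proposition 3.2]{cg20}.
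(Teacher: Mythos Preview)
Your proof is correct and follows essentially the same route as the paper's: the duality identity of Proposition~\ref{repr}, the sup-norm bound, H\"older on the $f\rho$ term, the parabolic Sobolev embedding $\mathcal{H}^1_{q'}\hookrightarrow L^{r'}$, Corollary~\ref{corfpmain}, and a final absorption via Young. The only cosmetic difference is that the paper works with $\iint |H_p(x,\nabla u)|^{\gamma'}\rho$ as the primary quantity (using \eqref{L1}) and applies Young directly between the exponents $m'<\gamma'$, whereas you work with $\iint|\nabla u|^{\gamma}\rho$ (using the third line of \eqref{H}) and insert a Jensen/H\"older step before Young; since $|H_p|^{\gamma'}\sim|\nabla u|^{\gamma}$ and $(\gamma-1)m'<\gamma\iff m'<\gamma'$, the two are equivalent and the exponent bookkeeping coincides.
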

\begin{rem}
Similar estimates as those in Proposition \ref{cross} can be found in \cite{cg20,CGpar,Goffi21JEE,Goffi21dir}.
\end{rem}
\begin{proof}
We rewrite the identity in Proposition \ref{repr} as
\[
\iint_{Q_\tau}L(x,H_p(x,\nabla u(x,t))\rho\,dxdt=\int_{\T^N}u(x,\tau)\rho_\tau(x)\,dx-\int_{\T^N}u_0(x)\rho(x,0)\,dx-\iint_{Q_\tau}f\rho\,dxdt.
\]
We pick $r>1$ such that
\[
\frac{(N+2)(\gamma-1)}{\gamma}<r<N+2<q.
\]
We use \eqref{L1}, the H\"older's inequality and the upper bound in \eqref{upper} to find
\begin{multline*}
C_L^{-1}\iint_{Q_\tau}|H_p(x,\nabla u(x,t)|^{\gamma'}\rho\,dxdt\leq 2\|u\|_{C(Q_T)}+\|f\|_{L^r(Q_T)}\|\rho\|_{L^{r'}(Q_T)}\\
\leq 2(\|u_0\|_\infty+\|f\|_{L^q(Q_\tau)})+\|f\|_{L^r(Q_\tau)}\|\rho\|_{L^{r'}(Q_\tau)}.
\end{multline*}
Let $\bar q$ be such that
\[
r'=\frac{(N+2)\bar q'}{N+2-\bar q'}.
\]
By the embedding $\mathcal{H}^1_{\bar q'}(Q_\tau)\hookrightarrow L^{r'}(Q_\tau)$ and choosing $r>\frac{N+2}{2}$ we have $\bar q'<\frac{N+2}{N+1}$. We are thus in position to apply Corollary \ref{corfpmain} and obtain
\[
\|\rho\|_{L^{r'}(Q_\tau)}\leq C(\|\rho\|_{\mathcal{H}^1_{\bar q'}(Q_\tau)}+1)\leq \tilde C\left(\iint_{Q_\tau}|H_p(x,\nabla u)|^{m'}\rho\,dxdt+1\right)\ ,
\]
where
\[
m'=1+\frac{N+2}{\bar q}.
\]
We thus end up with
\begin{multline*}
C_L^{-1}\iint_{Q_\tau}|H_p(x,\nabla u(x,t)|^{\gamma'}\rho\,dxdt\leq 2(\|u_0\|_\infty+\|f\|_{L^q(Q_\tau)})\\
+\tilde C\|f\|_{L^r(Q_\tau)}\left(\iint_{Q_\tau}|H_p(x,\nabla u)|^{m'}\rho\,dxdt+1\right).
\end{multline*}
The last integral can be absorbed on the left-hand side by the weighted Young's inequality since
\[
m'=1+\frac{N+2}{\bar q}=\frac{N+2}{r}<\gamma',
\]
so that we conclude the estimate.
\end{proof}
We are now ready for the proof of the main result of this section.
\begin{proof}[Proof of Theorem \ref{mainberndual}]
Since $q$ in \eqref{boundq} always satisfies $q'<\frac{N+2}{N+1}$, we can apply Corollary \ref{corfpmain} combined with the hypotheses \eqref{H} to conclude
\begin{multline*}
\|\rho\|_{\mathcal{H}_{\widetilde{q}'}^1(Q_\tau)}\leq C_1\left(\iint_{Q_\tau}|H_p(x,\nabla u(x,t))|^{m'}\rho\,dxdt+1\right)\leq C_2\left(\iint_{Q_\tau}|\nabla u|^{(\gamma-1)m'}\rho\,dxdt+1\right)\\
\leq C_3\left(\|\nabla u\|_{L^\infty(Q_\tau)}^{1-\theta}\iint_{Q_\tau}|\nabla u|^{(\gamma-1)m'-1+\theta}\rho\,dxdt+1\right),
\end{multline*}
where
\[
m'=1+\frac{N+2}{q}.
\]
We now choose $\theta>0$ small enough so that
\[
k=(\gamma-1)m'-1+\theta\leq \gamma.
\]
Note that this can be done by the initial choice of $q$. Thus, we use Proposition \ref{cross} and conclude the estimate.
\end{proof}
\section{Proof of the main result and further comments}\label{sec;proofs}
We start with the following refined variational Bochner identity that will allow us to exploit the information $f\in L^q_{x,t}$:
\begin{lemma}\label{bocdual}
Let $u$ be a classical solution to \eqref{hjm} and $\rho$ be a solution to \eqref{fpms}. Then, the following identity holds
\begin{multline}\label{bocfinal}
\int_{\T^N}w(x,\tau)\rho_\tau(x)\,dx+\iint_{Q_\tau}|D^2u(x,t)|^2\rho\,dxdt+\iint_{Q_\tau}\mathcal{I}[\nabla u](x,t)\rho\,dxdt\\
=-\iint_{Q_\tau}H_x(x,\nabla u(x,t))\cdot \nabla u(x,t)\rho\,dxdt\\
-\iint_{Q_\tau}f(x,t)\mathrm{div}(\nabla u(x,t)\rho)\,dxdt+\int_{\T^N}w(x,0)\rho(x,0)\,dx,
\end{multline}
where $\mathcal{I}[\nabla u](x,t)=\frac{1}{2}\int_{\T^N}|\nabla u(x,t)-\nabla u(x+y,t)|^2K(y)\,dy$.
\end{lemma}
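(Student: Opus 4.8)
The plan is to derive \eqref{bocfinal} by a duality argument: we couple the pointwise identity \eqref{eqwid} satisfied by $w=\tfrac12|\nabla u|^2$ with the adjoint equation \eqref{fpms} satisfied by $\rho$, in the same spirit as the representation formula of Proposition \ref{repr} (which is the analogous computation ``one derivative lower''), see also \cite{cg20}. Concretely, we multiply \eqref{eqwid} by the nonnegative classical solution $\rho$ of \eqref{fpms}, integrate over $Q_\tau$, and then transfer every derivative falling on $w$ onto $\rho$ via integration by parts. Since the left-hand side of \eqref{eqwid} is precisely the linearized operator $\partial_t-\Delta+(-\Delta)^s+H_p(x,\nabla u)\cdot\nabla$ applied to $w$, augmented by the zeroth-order contributions $|D^2u|^2$, $\mathcal{I}[\nabla u]$ and $H_x(x,\nabla u)\cdot\nabla u$, and $\rho$ solves the corresponding adjoint equation, all the terms pairing $w$ against a derivative of $\rho$ cancel, leaving only the prescribed source terms and the traces at $t=0$ and $t=\tau$.

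In detail, I would integrate by parts the four differential terms one at a time. In time, using the terminal condition $\rho(\cdot,\tau)=\rho_\tau$,
\[
\iint_{Q_\tau}\partial_t w\,\rho\,dxdt=\int_{\T^N}w(x,\tau)\rho_\tau(x)\,dx-\int_{\T^N}w(x,0)\rho(x,0)\,dx-\iint_{Q_\tau}w\,\partial_t\rho\,dxdt.
\]
In space, by periodicity the local diffusion is self-adjoint, $\iint_{Q_\tau}(-\Delta w)\rho\,dxdt=\iint_{Q_\tau}w\,(-\Delta\rho)\,dxdt$; the nonlocal term is self-adjoint as well, since $(-\Delta)^s$ on $\T^N$ is represented through the symmetric kernel $K$ recalled at the beginning of Section \ref{sec;prel}, so that Fubini gives $\iint_{Q_\tau}\big((-\Delta)^sw\big)\rho\,dxdt=\iint_{Q_\tau}w\,(-\Delta)^s\rho\,dxdt$; and the transport term becomes $\iint_{Q_\tau}\big(H_p(x,\nabla u)\cdot\nabla w\big)\rho\,dxdt=-\iint_{Q_\tau}w\,\mathrm{div}\big(H_p(x,\nabla u)\rho\big)\,dxdt$. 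Finally, on the right-hand side we integrate by parts in space,
\[
\iint_{Q_\tau}\nabla f\cdot\nabla u\,\rho\,dxdt=-\iint_{Q_\tau}f\,\mathrm{div}\big(\nabla u\,\rho\big)\,dxdt,
\]
which produces exactly the term $-\iint_{Q_\tau}f\,\mathrm{div}(\nabla u\,\rho)\,dxdt$ appearing in \eqref{bocfinal}.

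Summing up, the terms carrying $w$ reassemble into $\iint_{Q_\tau} w\,\big[-\partial_t\rho-\Delta\rho+(-\Delta)^s\rho-\mathrm{div}(H_p(x,\nabla u)\rho)\big]\,dxdt$, i.e. the pairing of $w$ with the adjoint of the linearization of \eqref{hjm} evaluated at $\rho$, which vanishes because $\rho$ solves \eqref{fpms}; the remaining untouched terms $\iint_{Q_\tau}|D^2u|^2\rho$, $\iint_{Q_\tau}\mathcal{I}[\nabla u]\rho$, $\iint_{Q_\tau}H_x(x,\nabla u)\cdot\nabla u\,\rho$, together with the traces $\int_{\T^N}w(x,\tau)\rho_\tau\,dx$ and $\int_{\T^N}w(x,0)\rho(x,0)\,dx$ and the divergence term just computed, give precisely \eqref{bocfinal}.

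I do not expect a genuine obstacle here: working with classical solutions $u$ and $\rho$ (with $f\in C^1$ and $u_0\in W^{1,\infty}$), every integration by parts is licit, boundary terms in $x$ disappear by periodicity, and $\nabla u\,\rho$ and $H_p(x,\nabla u)\rho$ are $C^1$ so that their divergences are classical. The two points deserving a word of care are the nonlocal integration by parts, namely the identity $\int_{\T^N}\big((-\Delta)^sw\big)\rho\,dx=\int_{\T^N}w\,(-\Delta)^s\rho\,dx$ (immediate from the symmetric-kernel representation of $(-\Delta)^s$ and Fubini), and the sign bookkeeping in the time integration by parts, due to $\rho$ carrying a terminal rather than an initial condition. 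If one wishes to avoid assuming $f\in C^1$ and $u$ smooth, the same identity follows for strong solutions $u\in W^{2,1}_q(Q_T)$ by using $\mathrm{div}(\nabla u\,\rho)$ directly as a test function in the weak formulation of \eqref{hjm}, as in Remark \ref{test} and \cite{CGell}, thereby never differentiating the equation pointwise.
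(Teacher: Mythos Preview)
Your proof is correct and follows exactly the approach of the paper: multiply \eqref{eqwid} by $\rho$, integrate over $Q_\tau$, and integrate by parts in time and in the source term involving $f$. You have simply spelled out in detail the integrations by parts (including the self-adjointness of $-\Delta$ and $(-\Delta)^s$ on $\T^N$ and the handling of the transport term) that the paper compresses into a single sentence; there is no substantive difference in method.
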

\begin{proof}
It is sufficient to multiply \eqref{eqwid} by the adjoint variable $\rho$ solving \eqref{fpms}, integrate over the cylinder $Q_\tau$ and finally integrate by parts the terms involving the time derivative and the one involving the source term $f(x,t)$ of the equation.
\end{proof}
We are now ready for the proof of the main result
\begin{proof}[Proof of Theorem \ref{main1}]
We start with \eqref{bocfinal}, observing that the third term on the left-hand side is nonnegative since $\rho$ and the nonlocal term $\mathcal{I}[\nabla u]$ are nonnegative, cf \cite[Remark 20.3]{GarofaloFrac}. Thus, we end up with the inequality
\begin{multline*}
\int_{\T^N}w(x,\tau)\rho_\tau(x)\,dx+\iint_{Q_\tau}|D^2u(x,t)|^2\rho\,dxdt\leq -\iint_{Q_\tau}H_x(x,\nabla u(x,t))\cdot \nabla u(x,t)\rho\,dxdt\\
-\iint_{Q_\tau}f(x,t)\mathrm{div}(\nabla u(x,t)\rho)\,dxdt+\int_{\T^N}w(x,0)\rho(x,0)\,dx,
\end{multline*}
First, we observe that the chain rule applied to the second term of the right-hand side leads to
\[
-\iint_{Q_\tau}f\mathrm{div}(\nabla u\rho)\,dxdt=-\iint_{Q_\tau}f\Delta u\rho\,dxdt-\iint_{Q_\tau}f\nabla u\cdot \nabla\rho\,dxdt.
\]
We are now left to estimate all the terms appearing on the right-hand side. We start with the one involving $H_x$. Using \eqref{H}, Proposition \ref{cross} with $k=\gamma$ and the Young's inequality we conclude
\begin{multline*}
\iint_{Q_\tau}|H_x(x,\nabla u(x,t))||\nabla u(x,t)|\rho\,dxdt\leq C_H\|\nabla u\|_{L^\infty(Q_\tau)}\iint_{Q_\tau}|\nabla u|^\gamma\rho\,dxdt+C_H\tau\\
\leq C_1+\frac{1}{8}\|\nabla u\|_{L^\infty(Q_\tau)}^2.
\end{multline*}
We now estimate the terms involving the source of the equation. We use the Cauchy-Schwarz inequality \eqref{CD}, the Young's inequality together with the H\"older's inequality with an exponent $\tilde p>1$ to be later determined to find that
\begin{multline*}
\iint_{Q_\tau}|f||\Delta u|\rho\,dxdt\leq \sqrt{N}\iint_{Q_\tau}|f||D^2 u|\rho\,dxdt\leq \frac12\iint_{Q_\tau}|D^2 u|^2\rho\,dxdt+\frac{N}{2}\iint_{Q_\tau}f^2\rho\,dxdt\\
\leq \frac12\iint_{Q_\tau}|D^2 u|^2\rho\,dxdt+\frac{N}{2}\|f\|^2_{L^{2\tilde{p}}(Q_\tau)}\|\rho\|_{L^{\tilde{p}'}(Q_\tau)}.
\end{multline*}
The last term can be bounded by $\|f\|_{L^q(Q_\tau)}$ through Theorem \ref{mainberndual} by choosing $\tilde p$ satisfying
\[
\frac{2(N+2)(\gamma-1)}{\gamma+1}<2\tilde{p}\leq q.
\]
This can always be done in view of \eqref{assq}. Therefore, we have by Corollary \ref{corp} and the Young's inequality
\begin{multline*}
\iint_{Q_\tau}|f||\Delta u|\rho\,dxdt\leq \frac{N}{2}\|f\|_{L^{q}(Q_\tau)}^2(\|\nabla u\|_{L^\infty(Q_\tau)}^{1-\theta}+1)+\frac12\iint_{Q_\tau}|D^2 u|^2\rho\,dxdt\\
\leq C_2+\frac18\|\nabla u\|_{L^\infty(Q_\tau)}^2+\frac12\iint_{Q_\tau}|D^2 u|^2\rho\,dxdt.
\end{multline*}
Using the H\"older's inequality, Theorem \ref{mainberndual}, and the weighted Young's inequality we deduce
\begin{multline*}
\iint_{Q_\tau}|f||\nabla u||\nabla\rho|\,dxdt\leq \|\nabla u\|_{L^\infty(Q_\tau)}\|f\|_{L^q(Q_T)}\|\nabla \rho\|_{L^{q'}(Q_T)}\leq \|\nabla u\|_{L^\infty(Q_\tau)}\|f\|_{L^q(Q_T)}\| \rho\|_{\mathcal{H}^1_{q'}(Q_T)}\\
\leq C_3\|\nabla u\|_{L^\infty(Q_\tau)}\|f\|_{L^q(Q_T)}\left(\|\nabla u\|_{L^\infty(Q_\tau)}^{1-\theta}+1\right)\leq \frac18\|\nabla u\|_{L^\infty(Q_\tau)}^2+C_4.
\end{multline*}
Finally, using the conservation of the $L^1$ norm for the transport-diffusion equation we have $\|\rho(x,0)\|_{L^1(\T^N)}=1$, and we conclude
\[
\int_{\T^N}w(x,0)\rho(x,0)\,dx\leq \frac{1}{2}\|\nabla u_0\|_{L^\infty(\T^N)}^2.
\]
Plugging all the estimates in the first inequality we have
\begin{multline*}
\int_{\T^N}\frac12|\nabla u(x,\tau)|^2\rho_\tau(x)\,dx+\iint_{Q_\tau}|D^2u(x,t)|^2\rho\,dxdt\leq C_5+\frac{1}{2}\|\nabla u_0\|_{L^\infty(\T^N)}\\
+\frac38\|\nabla u\|_{L^\infty(Q_\tau)}^2+\frac12\iint_{Q_\tau}|D^2 u|^2\rho\,dxdt,
\end{multline*}
where $C_5=\max\{C_1,C_2,C_4\}$ depends on $N,C_H,q,T,s,\|f\|_q$. The last inequality holds for all smooth $\rho_\tau\geq0$ such that $\|\rho_\tau\|_1=1$, which gives by duality
\[
\frac12\|\nabla u(\cdot,\tau)\|_{L^\infty(\T^N)}^2\leq C_5+\frac{1}{2}\|\nabla u_0\|_{L^\infty(\T^N)}^2\\
+\frac38\|\nabla u\|_{L^\infty(Q_\tau)}^2.
\]
The estimate then follows by passing to the supremum over $\tau\in(0,T)$ on the left-hand side.
\end{proof}
Some final remarks are in order:
\begin{rem}\label{test}
The argument continues to be valid for functions $u\in W^{2,1}_q$ instead of smooth solutions, without need to assume $f\in C^1$. This can be done using the test function $\varphi=-\mathrm{div}(\nabla u\rho)$ in the variational formulation of \eqref{hjm}, as in \cite{CGell}.
\end{rem}
\begin{rem}\label{rem;higher}
When $f$ is more regular, the proof works even when $\eps=0$, i.e. for the equation driven by the sole fractional Laplacian, in the subcritical case $s\in\left(\frac12,1\right)$ and for strong solutions belonging to the space $\mathcal{H}_q^{2s}(Q_T)=\{\partial_t u,(I-\Delta)^su\in L^q(Q_T)\}$, as introduced in \cite{CGsima}. The procedure simplifies since one can avoid an integration by parts in \eqref{bocfinal} and use the identity
\begin{multline*}
\int_{\T^N}w(x,\tau)\rho_\tau(x)\,dx\leq \int_{\T^N}w(x,\tau)\rho_\tau(x)\,dx+\iint_{Q_\tau}\mathcal{I}[\nabla u](x,t)\rho\,dxdt\\
=-\iint_{Q_\tau}H_x(x,\nabla u(x,t))\cdot \nabla u(x,t)\rho\,dxdt
-\iint_{Q_\tau}\nabla f(x,t)\cdot \nabla u(x,t)\rho\,dxdt+\int_{\T^N}w(x,0)\rho(x,0)\,dx.
\end{multline*}
Then, if $f\in L^\infty(0,T;W^{1,\infty}(\T^N))$, we have
\[
\iint_{Q_\tau}\nabla f(x,t)\cdot \nabla u(x,t)\rho\,dxdt\leq \|f\|_{L^\infty_t(W^{1,\infty}_x)}\|\nabla u\|_{L^\infty(Q_\tau)}\tau,
\]
and then it is sufficient to use the Young's inequality. If, instead, $f\in L^q(0,T;W^{1,q}(\T^N))$ for $q$ such that
\[
q>\max\left\{N+2s,\frac{(N+2s)(\gamma-1)}{2s-1}\right\},
\] 
it is enough to argue as follows:
\[
\iint_{Q_\tau}\nabla f(x,t)\cdot \nabla u(x,t)\rho\,dxdt\leq \|\nabla u\|_{L^\infty(Q_\tau)}\iint_{Q_\tau}|\nabla f|\rho\,dxdt\leq \|\nabla u\|_{L^\infty(Q_\tau)}\|f\|_{L^p_t(W^{1,p}_x)}\|\rho\|_{L^{p'}(Q_\tau)}
\]
and use the estimate in Corollary 5.9 of \cite{Goffi21JEE} combined with the Young's inequality to conclude the gradient bound. We emphasize that an assumption like $f\in L^q(0,T;W^{1,q}(\T^N))$ lies in between those assumed for the use of the weak Bernstein method and the ones to run the Ishii-Lions argument. It remains an open problem whether the $L^\infty$ gradient bound holds for $f\in L^q_{x,t}$ and $\gamma>2s$: by scaling, we expect this can be true provided that $q>\frac{N+2s}{2s-1}$, $s\in(1/2,1)$.
\end{rem}
\begin{rem}
Lipschitz estimates in the subquadratic case can be obtained in a slightly different manner, combining Lemma \ref{czmix} with Gagliardo-Nirenberg interpolation inequalities. Indeed, regarding \eqref{hjm} as 
\[
\partial_tu-\Delta u+(-\Delta)^s u=-H(x,\nabla u)+f(x,t)
\]
one has
\[
\|u\|_{W^{2,1}_q}\lesssim \|\nabla u\|_{L^{q\gamma}}^\gamma+\|f\|_q+\|u_0\|_{W^{2-2/q,q}}.
\]
Then, Gagliardo-Nirenberg interpolation inequalities lead to
\[
\|\nabla u\|_{L^{q\gamma}}^\gamma\leq C\|\nabla u\|_{L^{2q}}^\gamma \lesssim \|u\|_{W^{2,1}_q}^{\frac{\gamma}{2}} \|u\|_{\infty}^{\frac{\gamma}{2}}.
\]
Since $\gamma\in(1,2)$, one can use the Young's inequality and the sup-norm estimate in Proposition \ref{sup} to conclude a bound on $\|u\|_{W^{2,1}_q}$ for $q>\frac{N+2}{2}$. By parabolic Sobolev embeddings it then follows that $\|\nabla u\|_{L^\infty}$ is bounded whenever $q>N+2$. The drawback of this approach is the requirement on $\gamma$ (that must be of subquadratic growth) and on the initial data, which needs to be more regular respect to Theorem \ref{main1}. The same idea works even for problems driven by the sole fractional Laplacian using interpolation estimates and the $L^\infty$ bounds in Theorem 2.3 of \cite{Goffi21JEE}, provided that $\gamma<2s$, $s\in(1/2,1)$. Still, the same approach of this manuscript can be refined to obtain maximal regularity estimates via new H\"older and $L^p$ bounds for mixed diffusion problems, see \cite{CGpar}. 
\end{rem}
\begin{rem}
We believe that Lipschitz bounds can be produced under the weaker restriction $q>N+2$ when $\gamma>3$, and this would be compatible with the elliptic results in \cite{Lions85}, but this remains an open problem even for viscous problems driven by the Laplacian.
\end{rem}


\begin{thebibliography}{10}

\bibitem{BGL}
D.~Bakry, I.~Gentil, and M.~Ledoux.
\newblock {\em Analysis and geometry of {M}arkov diffusion operators}, volume
  348 of {\em Grundlehren der Mathematischen Wissenschaften [Fundamental
  Principles of Mathematical Sciences]}.
\newblock Springer, Cham, 2014.

\bibitem{Barles91}
G.~Barles.
\newblock A weak {B}ernstein method for fully nonlinear elliptic equations.
\newblock {\em Differential Integral Equations}, 4(2):241--262, 1991.

\bibitem{Barles21}
G.~Barles.
\newblock Local gradient estimates for second-order nonlinear elliptic and
  parabolic equations by the weak {B}ernstein's method.
\newblock {\em Partial Differ. Equ. Appl.}, 2(6):Paper No. 71, 15, 2021.

\bibitem{BCCI12}
G.~Barles, E.~Chasseigne, A.~Ciomaga, and C.~Imbert.
\newblock Lipschitz regularity of solutions for mixed integro-differential
  equations.
\newblock {\em J. Differential Equations}, 252(11):6012--6060, 2012.

\bibitem{BLT17}
G.~Barles, O.~Ley, and E.~Topp.
\newblock Lipschitz regularity for integro-differential equations with coercive
  {H}amiltonians and application to large time behavior.
\newblock {\em Nonlinearity}, 30(2):703--734, 2017.

\bibitem{BS}
G.~Barles and P.~E. Souganidis.
\newblock On the large time behavior of solutions of {H}amilton-{J}acobi
  equations.
\newblock {\em SIAM J. Math. Anal.}, 31(4):925--939, 2000.

\bibitem{Bernstein}
S.~Bernstein.
\newblock Sur la g\'{e}n\'{e}ralisation du probl\`eme de {D}irichlet.
\newblock {\em Math. Ann.}, 69(1):82--136, 1910.

\bibitem{BDVVcpde}
S.~Biagi, S.~Dipierro, E.~Valdinoci, and E.~Vecchi.
\newblock Mixed local and nonlocal elliptic operators: regularity and maximum
  principles.
\newblock {\em Comm. Partial Differential Equations}, 47(3):585--629, 2022.

\bibitem{BDVVprse}
S.~Biagi, E.~Vecchi, S.~Dipierro, and E.~Valdinoci.
\newblock Semilinear elliptic equations involving mixed local and nonlocal
  operators.
\newblock {\em Proc. Roy. Soc. Edinburgh Sect. A}, 151(5):1611--1641, 2021.

\bibitem{CDV}
X.~Cabr\'{e}, S.~Dipierro, and E.~Valdinoci.
\newblock The {B}ernstein technique for integro-differential equations.
\newblock {\em Arch. Ration. Mech. Anal.}, 243(3):1597--1652, 2022.

\bibitem{CC}
L.~A. Caffarelli and X.~Cabr\'{e}.
\newblock {\em Fully nonlinear elliptic equations}, volume~43 of {\em American
  Mathematical Society Colloquium Publications}.
\newblock American Mathematical Society, Providence, RI, 1995.

\bibitem{CDLP}
I.~Capuzzo~Dolcetta, F.~Leoni, and A.~Porretta.
\newblock H\"older estimates for degenerate elliptic equations with coercive
  {H}amiltonians.
\newblock {\em Trans. Amer. Math. Soc.}, 362(9):4511--4536, 2010.

\bibitem{CianchiMazyaCPDE}
A.~Cianchi and V.~G. Maz'ya.
\newblock Global {L}ipschitz regularity for a class of quasilinear elliptic
  equations.
\newblock {\em Comm. Partial Differential Equations}, 36(1):100--133, 2011.

\bibitem{CGsima}
M.~Cirant and A.~Goffi.
\newblock On the existence and uniqueness of solutions to time-dependent
  fractional {MFG}.
\newblock {\em SIAM J. Math. Anal.}, 51(2):913--954, 2019.

\bibitem{cg20}
M.~Cirant and A.~Goffi.
\newblock Lipschitz regularity for viscous {H}amilton-{J}acobi equations with
  {$L^p$} terms.
\newblock {\em Ann. Inst. H. Poincar\'{e} Anal. Non Lin\'{e}aire},
  37(4):757--784, 2020.

\bibitem{CGpar}
M.~Cirant and A.~Goffi.
\newblock Maximal {$L^q$}-regularity for parabolic {H}amilton-{J}acobi
  equations and applications to {M}ean {F}ield {G}ames.
\newblock {\em Ann. PDE}, 7(2):Paper No. 19, 40, 2021.

\bibitem{CGell}
M.~Cirant and A.~Goffi.
\newblock On the problem of maximal {$L^q$}-regularity for viscous
  {H}amilton-{J}acobi equations.
\newblock {\em Arch. Ration. Mech. Anal.}, 240(3):1521--1534, 2021.

\bibitem{CTV15}
P.~Constantin, A.~Tarfulea, and V.~Vicol.
\newblock Long time dynamics of forced critical {SQG}.
\newblock {\em Comm. Math. Phys.}, 335(1):93--141, 2015.

\bibitem{Constantin}
P.~Constantin and V.~Vicol.
\newblock Nonlinear maximum principles for dissipative linear nonlocal
  operators and applications.
\newblock {\em Geom. Funct. Anal.}, 22(5):1289--1321, 2012.

\bibitem{DFM}
C.~De~Filippis and G.~Mingione.
\newblock Gradient regularity in mixed local and nonlocal problems.
\newblock arXiv:2204.06590, 2022.

\bibitem{DirrNguyen}
N.~Dirr and V.~D. Nguyen.
\newblock Some new results on {L}ipschitz regularization for parabolic
  equations.
\newblock {\em J. Evol. Equ.}, 19(4):1149--1166, 2019.

\bibitem{Evans2}
L.~C. Evans.
\newblock Adjoint and compensated compactness methods for {H}amilton-{J}acobi
  {PDE}.
\newblock {\em Arch. Ration. Mech. Anal.}, 197(3):1053--1088, 2010.

\bibitem{GarofaloFrac}
N.~Garofalo.
\newblock Fractional thoughts.
\newblock In {\em New developments in the analysis of nonlocal operators},
  volume 723 of {\em Contemp. Math.}, pages 1--135. Amer. Math. Soc.,
  [Providence], RI, 2019.

\bibitem{GarroniMenaldi2}
M.~G. Garroni and J.-L. Menaldi.
\newblock {\em Green functions for second order parabolic integro-differential
  problems}, volume 275 of {\em Pitman Research Notes in Mathematics Series}.
\newblock Longman Scientific \& Technical, Harlow; John Wiley \& Sons, Inc., New York, 1992.

\bibitem{GT}
D.~Gilbarg and N.~S. Trudinger.
\newblock {\em Elliptic partial differential equations of second order}, volume
  224 of {\em Grundlehren der Mathematischen Wissenschaften [Fundamental
  Principles of Mathematical Sciences]}.
\newblock Springer-Verlag, Berlin, second edition, 1983.

\bibitem{Goffi21dir}
A.~Goffi.
\newblock On the optimal ${L}^q$-regularity for viscous {H}amilton-{J}acobi
  equations with sub-quadratic growth in the gradient.
\newblock arXiv:2112.02676, 2021.

\bibitem{Goffi21JEE}
A.~Goffi.
\newblock Transport equations with nonlocal diffusion and applications to
  {H}amilton-{J}acobi equations.
\newblock {\em J. Evol. Equ.}, 21(4):4261--4317, 2021.

\bibitem{Gomesbook}
D.~A. Gomes, E.~A. Pimentel, and V.~Voskanyan.
\newblock {\em Regularity theory for mean-field game systems}.
\newblock SpringerBriefs in Mathematics. Springer, [Cham], 2016.

\bibitem{Ibdah}
H.~Ibdah.
\newblock Strong solutions to a modified {M}ichelson-{S}ivashinsky equation.
\newblock {\em Commun. Math. Sci.}, 19(4):1071--1100, 2021.

\bibitem{IshiiLions}
H.~Ishii and P.-L. Lions.
\newblock Viscosity solutions of fully nonlinear second-order elliptic partial
  differential equations.
\newblock {\em J. Differential Equations}, 83(1):26--78, 1990.

\bibitem{LeyNguyen}
O.~Ley and V.~Nguyen.
\newblock Lipschitz regularity results for nonlinear strictly elliptic
  equations and applications.
\newblock {\em J. Differential Equations}, 263(7):4324--4354, 2017.

\bibitem{LionsBook}
P.-L. Lions.
\newblock {\em Generalized solutions of {H}amilton-{J}acobi equations},
  volume~69 of {\em Research Notes in Mathematics}.
\newblock Pitman (Advanced Publishing Program), Boston, Mass.-London, 1982.

\bibitem{Lions85}
P.-L. Lions.
\newblock Quelques remarques sur les probl\`emes elliptiques quasilin\'{e}aires
  du second ordre.
\newblock {\em J. Analyse Math.}, 45:234--254, 1985.

\bibitem{SerrinPeletier}
L.~A. Peletier and J.~Serrin.
\newblock Gradient bounds and {L}iouville theorems for quasilinear elliptic
  equations.
\newblock {\em Ann. Scuola Norm. Sup. Pisa Cl. Sci. (4)}, 5(1):65--104, 1978.

\bibitem{PorrCCM}
A.~Porretta.
\newblock On the regularity of the total variation minimizers.
\newblock {\em Commun. Contemp. Math.}, 23(1):Paper No. 1950082, 17, 2021.

\bibitem{PP}
A.~Porretta and E.~Priola.
\newblock Global {L}ipschitz regularizing effects for linear and nonlinear
  parabolic equations.
\newblock {\em J. Math. Pures Appl. (9)}, 100(5):633--686, 2013.

\bibitem{RS}
L.~Roncal and P.~R. Stinga.
\newblock Fractional {L}aplacian on the torus.
\newblock {\em Commun. Contemp. Math.}, 18(3):1550033, 26, 2016.

\bibitem{SerrinProc}
J.~Serrin.
\newblock Gradient estimates for solutions of nonlinear elliptic and parabolic
  equations.
\newblock In {\em Contributions to nonlinear functional analysis ({P}roc.
  {S}ympos., {M}ath. {R}es. {C}enter, {U}niv. {W}isconsin, {M}adison, {W}is.,
  1971)}, pages 565--601, 1971.

\bibitem{ZacherCPDE}
A.~Spener, F.~Weber, and R.~Zacher.
\newblock The fractional {L}aplacian has infinite dimension.
\newblock {\em Comm. Partial Differential Equations}, 45(1):57--75, 2020.

\bibitem{TranBook}
H.~V. Tran.
\newblock {\em Hamilton-{J}acobi equations}, volume 213 of {\em Graduate
  Studies in Mathematics}.
\newblock American Mathematical Society, Providence, RI, 2021.
\newblock Theory and applications.

\end{thebibliography}
\end{document}